\def\red#1{\textcolor[rgb]{1,0.00,0.00}{#1}}
\def\???{\red{???}}
\newcounter{todoc}
\newtheorem{theorem}{Theorem}[section]
\newtheorem{lemma}[theorem]{Lemma}
\newtheorem{proposition}[theorem]{Proposition}
\newtheorem{remark0}[theorem]{Remark}
\newtheorem{example0}[theorem]{Example}
\newtheorem{question}[theorem]{Question}
\newenvironment{example}{\begin{example0}\rm}{\end{example0}}
\newcommand{\propref}[1]{Proposition~\ref{#1}}
\newcommand{\thmref}[1]{Theorem~\ref{#1}}
\newcommand{\lemref}[1]{Lemma~\ref{#1}}
\newcommand{\exref}[1]{Example~\ref{#1}}
\def\res{{\rm K}}
\def\series#1{{\res[\![x_1,\dots x_{#1}]\!]}}
\def\sers2{{\res[\![x,y]\!]}}
\def\ser3{{\res[\![x,y,z]\!]}}
\def\sery{{\res[\![y]\!]}}
\def\serz{{\res[\![z]\!]}}
\def\seryz{{\res[\![y,z]\!]}}
\def\polis#1{{\res[x_1,\dots x_{#1}]}}
\def\pol2{{\res[x,y]}}
\def\pol3{{\res[x,y,z]}}
\def\TT{{\mathbb {T}^n}}
\def\tauh{{\overline{\tau}}}
\def\lt#1{{\rm{Lt}}_{\tauh}(#1)}
\def\supp{{\rm{Supp}}}
\def\deg{{\rm{deg}}}
\def\order{{\rm{order}}}
\def\NF{{\rm NF}}
\def\ui{{\underline i}}
\def\dim{\operatorname{dim}}
\def\depth{\operatorname{depth}}
\newcommand{\m}{\mathfrak m}
\newcommand{\M}{\mathcal M}
\def\max{\operatorname{max}}
\title{  \bf   On the Hilbert function of
one-dimensional local complete intersections
\footnote{ 2010 {\it Mathematics Subject Classification}.
Primary 13H10; Secondary 13H15
\newline
\indent \ \ {\it Key words and Phrases:} one dimensional local rings, Hilbert functions, complete intersections.}}
\author{\large   J. Elias
\thanks{Partially supported by  MTM2010-20279-C02-01}
\and \large M. E. Rossi
\and \large G. Valla
 }
\begin{document}
\maketitle

\begin{abstract}
The Hilbert function of standard graded algebras are well understood by
Macaulay's theorem and very little is known in the local case,
even if we assume that the local ring is a complete intersection.
An extension to the power series ring $R$ of the theory of Gr\"{o}bner bases (w.r.t.
local degree orderings) enable us to 
characterize the Hilbert function  of  one dimensional quadratic complete intersections
 $A=R/I$, and we give a structure theorem of the minimal system of generators of $I$ in terms of the  Hilbert function.
We  find several restrictions for the Hilbert function of   $A$ in the case that  $I$ is a complete intersection of  type $(2,b). $  Conditions for the  Cohen-Macaulyness of the associated graded ring of $A$ are given.
\end{abstract}

\section{Introduction and preliminaries}

Let $G$ be a standard graded K-algebra; by this we mean $G=P/I$ where $P=\polis{n}$ is a polynomial ring over the field K and $I$ an homogeneous ideal. It is clear that  for every $t\ge 0$  the set $I_t$ of the forms of degree $t$ in $P$ is a K-vector space of finite dimension. For every positive integer $t$ the Hilbert function of $G$ is defined as follows: $$HF_G(t)=  dim_\res G_t= dim_\res P_t- dim_\res I_t=\binom{n+t-1}{t}- \ dim_\res I_t.$$ Its generating function $HS_G(\theta)=\sum_{t\in \mathbb{N}}HF_G(t) \theta^t$ is the Hilbert Series of $G.$

 The relevance of this notion comes from the fact  that in the case $I$ is the defining ideal of a projective variety $V,$ the dimension, the degree and the arithmetic genus  of $V$ can be immediately computed from the Hilbert Series of $P/I.$

 A fundamental theorem by Macaulay describes exactly those numerical functions which occur as the Hilbert functions  of a standard graded K-algebra. Macaulay's Theorem says that for each $t$ there is an upper bound for $HF_G(t+1)$ in terms of $HF_G(t)$, and this bound is sharp in the sense that any numerical function satisfying it can be realized as the Hilbert function of a suitable homogeneous standard K-algebra. These numerical functions are called ``admissible''  and will be described  in the next section.

 It is not surprising that additional properties yield further constraints on the Hilbert function.
 Thus, for example, the Hilbert function  of a Cohen-Macaulay  standard graded algebra is  completely described by another  theorem of Macaulay which says that the Hilbert series admissible for a Cohen-Macaulay standard graded algebra of dimension d, are of the type
  $$\frac{1+h_1\theta+\dots h_s\theta ^s}{(1-z)^d}$$  where $1+h_1\theta+\dots h_s\theta ^s$ is admissible.

\smallskip
The Hilbert function of a local ring $A$ with maximal ideal $\m$ and residue field K is defined as follows: for every $t \ge 0$
$$
HF_A(t) =   dim_\res \left(\frac{\m^t}{\m^{t+1}}\right).
$$

It is clear that $HF_A(t)$ is equal to the minimal number of generators of the ideal $\m^t$ and we can  see that the Hilbert function of the local ring $A$ is the Hilbert function of the following  standard graded algebra $$gr_{\m}(A)=\oplus_{t\ge 0}\ \m^t/ \m^{t+1}.$$ This algebra is called the associated graded ring of the local ring $(A,\m)$  and corresponds to a relevant geometric construction in the case $A$ is the localization at the origin O of the coordinate ring of an affine variety $V$ passing through O. It turns out that  $gr_{\m}(A)$ is the coordinate ring of the {\it Tangent Cone } of $V$ at O, which is the cone composed of all lines that are the limiting positions of secant lines to $V$ in O.

Despite the fact that the Hilbert function of a standard graded K-algebra $G$ is so well understood in the case
$G$ is Cohen-Macaulay, very little is known in the local case. This mainly because, passing from the local ring $A$ to its   associated  graded ring, many of the properties can be  lost.
This is the reason why we are very far from a description of the admissible  Hilbert functions for a Cohen-Macaulay local ring when $gr_{\m}(A)$ is not Cohen-Macaulay.
  We only have some small knowledge of the behavior of these numerical functions.

An example by Herzog and Waldi (see \cite{HW75}) shows that the Hilbert function of a one dimensional Cohen-Macaulay local ring can be decreasing, even  the number of generators of the square of the maximal ideal can be less than the number of generators of the maximal ideal itself.
Further, without restrictions on the embedding dimension, the Hilbert function of a one dimensional Cohen-Macaulay local ring can  present arbitrarily many "valleys"  (see \cite{eli94a}).

Even  if we restrict ourselves  to the case of a complete intersection, very little is known. In \cite{put05} it has been proved that the Hilbert function of a positive dimensional codimension two complete intersection $R/(f,g)$ is non decreasing, but we have no answer to the  question asked by Rossi (see \cite{Ros11}) whether the same is true for every one dimensional Gorenstein local ring.

    In the case that the embedding dimension of the local ring  is at most three, the first author gave a positive answer to a question stated by J. Sally, by proving that the Hilbert function of a one dimensional Cohen-Macaulay local ring is increasing (see \cite{Eli93a}).
 But  examples show that this is not true anymore if the embedding dimension is bigger than three.

 All this amount of results shows that, without strong assumptions, the Hilbert function of a one-dimensional Cohen-Macaulay local ring could be  very wild. This is the reason why, in this paper, we restrict ourselves to the case  $A=\ser3/I, $ where the ideal $I \subseteq (x,y,z)^2 $ is generated by a regular sequence $\{f,g\}$ of elements of $R$.
We will see that, even with all these  strong assumptions, the problem of determining  the admissible Hilbert functions is  not so easy, possibly   because it is strictly related to the study of curve  singularities in ${\mathbb {A}}^3.$

If we consider the corresponding Artinian problem, then we deal with a pair of plane curves. Several papers have been written in which the Hilbert function of an  Artinian  complete intersection ring $A=\sers2/(f,g) $ has been studied in terms of the invariants of the curves $f=g=0$ (see Iarrobino \cite{Iar94}, Goto, Heinzer, Kim   \cite{GHK08}, Kothari \cite{Kor78}, ....).      It is an early  result due to Macaulay that the Hilbert function of such a ring $A$ verifies for every positive integer $n$ the following inequality:
$$| HF_A(n+1) - HF_A(n)|  \le 1. $$
It has been proved  that given such a  numerical function,
there exists a complete intersection $I=(f,g) \subseteq \sers2 $ with that   Hilbert function, \cite{Ber09}, \cite{GHK08}.
Hence the problem is solved in the Artinian case and, more in general, when  $gr_{\m}(A)$ is Cohen-Macaulay.
Conditions on the Cohen-Macaulayness of  $gr_{\m}(A)$ have been studied by Goto, Heinzer and Kim  in \cite{GHK06}, \cite{GHK07}.

Classical results concerning Cohen-Macaulay local rings  of dimension one  will be useful in this paper. For example it is well known, see \cite{Mat77},\cite{Eli93a}, \cite{RV-Book},  that there exists an integer  $e \ge 1$, the multiplicity of $A, $  such that
\begin{enumerate}
\item[(i)]
$HF_A(n) \le e$ for all $n$,
\item[(ii)]
If $HF_A(j)=e$ for some  $j$, then $HF_A(n) = e$ for all $n\ge j$,
\item[(iii)]
For every $j\ge 0$ we have $HF_A(j) \ge \min \{j+1, e \}.$ In particular $HF_A(e-1)= e.$
\end{enumerate}

\noindent
The least integer $r $ such that $HF(r)=e $  coincides with  the reduction number of $\m, $ which is
the least integer $r$ such that $\m^{r+1}= x \m^r $ for some (hence any) superficial element $x \in \m.  $
We say that the Hilbert function of $A$  is  {\it {increasing}} (resp. {\it {strictly increasing}} )  if $HF(n) \le HF_A(n+1)$ (resp. $HF(n) < HF_A(n+1)$) for all $n=0,\cdots, r -1.$

Throughout the whole paper  $\res$ denotes an algebraically closed field
of characteristic zero.
Let $R=\series{n}$ be the ring of  formal power series in the indeterminates $\{x_1,\cdots,x_n\}$ with coefficients in $\res$  and  maximal ideal $\mathcal M =(x_1,\cdots,x_n).$
 We denote by  $\mathbb U (R)$ the group of units  of $R$.
Let  $ I$ be an ideal of $R$ and consider the local ring $A=R/I$ whose maximal ideal is   $\m:= {\mathcal M}/I. $

We have seen that the Hilbert function of a local ring $A$ is the same as that of the associated graded ring $gr_{\m}(A).$ Hence it will be useful to recall the presentation of this standard graded algebra.
 For every power series  $f\in R\setminus \{0\}$ we can write $f=f_v+f_{v+1}+\cdots$, where $f_v$ is not zero and $f_j$ is an homogeneous polynomial of degree $j$ in $P$ for every $j\ge v.$
 We say that $v$ is the order of $f$, denote $f_v$ by $f^*$ and call it the initial form of $f.$
 If $f=0$ we agree that its order is $\infty.$
  It is well known that $gr_{\m}(A)=P/I^*$,  where $I^*$ is the homogeneous ideal of the polynomial ring $P$ generated by the initial forms of the elements of $I.$
  A set of power series $f_1,\cdots,f_r\in I$ is a standard basis of $I$ if $I^*=(f_1^*,\cdots,f_r^*)$, (see \cite{Hir64a}).
  It is clear that every ideal $I$ has a standard basis and that every standard basis is a basis.
  However not every basis is a standard basis.
 To determine a standard basis of a given ideal of $R$ is a classical  hard problem, even in the very special case we are involved with in this paper.

In order to determine  the Hilbert function of such local complete intersections it seems to be  hopeless to use only the  theory of tangent cones. Instead  we found crucial to consider  the extension  to the power series ring of the theory of Gr\"obner bases introduced by  Buchberger  for  ideals in the polynomial ring. We can say that a mixture of the theory of enhanced standard basis with that of the ideals of initial forms has been the  winning strategy for us.
The use of the theory of enhanced standard bases for studying  of the Hilbert function of a local ring seems to be  unusual, while there are several papers in  the Theory of Singularities where this topic is essential.

We recall that the notion of Gr\"obner basis is defined by considering  a term ordering on
the terms of $P $ (i.e. a monomial  ordering where all the  terms are bigger than $1$).
Instead, we need here to consider the so called {\it {local degree ordering,  }} see \cite{singular}, Chapter 6, a monomial  ordering on
the terms of $P $ which is not a term ordering.

 \smallskip
We denote by $\TT$ the set of terms or monomials of $P$; let $\tau$ be a term ordering in $\TT$,
and we assume that $x_1>\cdots > x_n$.
We define a new total order $\tauh$ on $\TT$ in the following way:
given $m_1, m_2\in \TT$
we let
$m_1 >_{\tauh} m_2$ if and only if $\deg(m_1) < \deg(m_2)$ or $\deg(m_1) = \deg(m_2)$ and $m_1 >_{\tau} m_2$.
\smallskip
Given $f\in R$ we denote by $\supp(f)$ the support of $f$, i.e. if $f=\sum_{\ui\in \mathbb N^n} a_{\ui} x^{\ui}$
then $\supp(f)$ is the set of terms  $x^\ui$ such that $a_{\ui}\neq 0$. We remark that, given $f$ in $R$,  there is a monomial which is the biggest of the monomials in
$\supp(f)$ with respect to $\tauh$: namely, since the support of $f^*$ is a finite set, we can take  the maximum with respect to $\tau$ of the elements of this set. This monomial is called the leading monomial of $f$ with respect to $\tauh$ and is denoted by $Lt_{\tauh}(f).$ By definition we  have   $$Lt_{\tauh}(f)=Lt_{\tau}(f^*).$$

As usual we define the leading term ideal associated to an ideal $I\subset R$ as the   monomial ideal $\lt{I}$  generated in $R$ by $\lt{f}$ with $f$ running in $ I$.

\smallskip
In \cite{Ber09} a  set $\{ f_1, \dots , f_r\} $ of elements of $I$  is called an
{\it{enhanced}}  standard basis of $I$  if the corresponding leading terms generate $\lt{I}.$ Every enhanced standard basis is also a
standard basis, but the converse is not true.
In \cite{singular} an enhanced standard basis of $I$  is simply called a standard basis.
 We have $\lt{I} P =Lt_{\tau}(I^*)$ (see \cite{Ber09} Proposition 1.5.) so that
$$
HF_{R/I}=HF_{P/I^*}=HF_{R/\lt{I}}.
$$
In the Theory of enhanced standard basis a crucial result is the  Grauert's Division theorem, \cite[Theorem 6.4.1]{singular}. It claims the following.
Given a set of  formal power series $f, f_1,\cdots, f_m \in R$
there exist power series $q_1,\dots,q_m,r \in R$ such that $f=\sum_{j=1}^mq_jf_j+r$ and, for all $j=1,\dots,m$,
\begin{enumerate}
\item[(1)]
No monomial of $r$ is divisible by $Lt_{\tauh}(f_j)$,

\item[(2)]
 $Lt_{\tauh}(q_jf_j)\le Lt_{\tauh}(f)$ if $q_j\neq 0.$
\end{enumerate}

\noindent
With the above result we can define  $$NF(f | \{f_1,\dots , f_m\}):=r$$ and obtain in this way a reduced normal form of any power series $f$ with respect to a given finite subset of $R$.
The existence of a reduced normal form is the basis  to obtain, in the formal power series  ring, all  the properties of Gr\"obner basis already proved in the classical case. In particular Buchberger's criterion holds for the power series ring $\series{n}$, see \cite[Theorem 1.7.3]{singular}. A similar approach was  introduced by Mora in 1982 in the localization of $P, $ (see \cite{Mor83a}).

 \smallskip
 We come now to describe the content of the paper.
   The main result  is the description of all the numerical functions which are the Hilbert functions of what we call a quadratic complete intersection of codimension two in $\ser3.$ By this we mean  local rings of type $\ser3/(f,g)$ where $f$ and $g$ are power series of order two which form a regular sequence in $\ser3$ with the property that $g^*\notin (f^*).$

 We first prove in Proposition \ref{base1}  that for the Hilbert function of such local rings with a given multiplicity $e,$  there are only two possibilities:
\begin{enumerate}
\item[(1)]
either is increasing by one up to reach the multiplicity, say
$$\{1,3,4,5,6,7,...,e-1,e,e,e....\},$$

\item[(2)]
or it is increasing by one with a flat in position $n$ which is  unique, by which we mean that for some $n\le e-3$ we have  the sequence
 \begin{center} \begin{tabular}{|c|c|c|c|c|c|c|c|c|c|c|c|c|c|c|}
0 & 1  & 2 & 3& 4& \dots & n-1 & n  &n+1 & n+2 & \dots & e-2 & e-1  & e& \dots  \\ \hline
1 &3 & 4  & 5 & 6 &\dots & n+1 & n+2 & n+2 & n+3 & \dots & e-1  &  e & e& \dots  \\
\end{tabular} \end{center}
\end{enumerate}
\noindent
 It turns out that if the Hilbert function is increasing by one, case (1),  there is no constriction on the multiplicity. Instead, if the Hilbert function has a flat, case (2), the multiplicity $e$ cannot be  too big, namely we must have $e\le 2n.$
  This unexpected result is proved in Theorem \ref{theorem1} which is the main result of this paper.
 Examples \ref{ex} and  \ref{examples2n} show that the above Hilbert functions are realizable.

 We present also  two more results on the Cohen-Macaulayness of the tangent cone of such complete intersections.
 First, in Proposition \ref{increasing1}, we prove that a  quadratic complete intersection of codimension two  in $\ser3$ with  Hilbert function   increasing by one has an associated graded ring which is Cohen-Macaulay.
Finally, as a second application of the methods we used in the proof of the main theorem, we are able to prove in Proposition \ref{xyxz} that for a quadratic complete intersection $A=\ser3/I,$ the tangent cone is Cohen-Macaulay  in the case the vector space $I^*_{\ 2}$  does not contain a square of a linear form.

Section four is devoted to   give a structure theorem, modulo
analytic isomorphisms, of the minimal system of generators of
quadratic complete intersection ideals $I$ of codimension two in $\ser3$, \thmref{conjecture22} and \thmref{conjecture23}.
These   results are obtained by taking into account the two possible Hilbert functions that can  occur
for such an ideal.

In the last section of the paper we give several examples to illustrate our results, as well as possible extensions.

\smallskip
\section{Ideals of  type $(2,b)$}

 From now on we assume that $A=\ser3/I$ where $I$ is a codimension two complete intersection ideal  of $R=\ser3$.
 Given the integers  $b\ge  a \ge  2,$   we say that $A$ is {\it{of type $ (a,b) $}},  or $I$ is of type $ (a,b), $ if $I$ can be generated by a regular sequence $\{f, g\} $ such that \order(f)=a, \order(g)=b  and $g^* \not \in (f^*). $      In the language of \cite[Chapter   III, Section 1]{Hir64a} we  write   $\nu^*(I)=(a,b)$  with the meaning that $I$ is of type $ (a,b).$

 In this paper we will be mainly concerned with local rings of type $(2,2)$; however in this section properties of local rings of type $(2,b)$ will be considered.

  \smallskip
  In the following Proposition we prove that the Hilbert function of a local ring of type $(2,b)$ verifies for every  $n\ge 1$ the inequalities
\begin{equation}
\label{constriction}
0\le HF_A(n+1)-HF_A(n)\le 1.
\end{equation}
The question now is whether every   numerical function $H$ such that $H(0)=1$, $H(1)=3$ and verifies (\ref{constriction})    is the Hilbert function of some local ring of type $(2,b).$ This is not the case because,  for example,  the numerical function $\{ 1,3,4,5,5,6,7,7, ....\} $ verifies (\ref{constriction}) but we will see later  that   it cannot be the Hilbert function of a local ring of type $(2,b).$

\smallskip
A local ring $A$ of type $(2,b)$ is Cohen-Macaulay of embedding dimension three so that  we know that the Hilbert function is not decreasing. We say that $HF_A$ {\it admits a  flat
in position $n$} if $$HF_A(n)=HF_A(n+1) < e.$$

The first basic properties of the Hilbert function of a local ring of type $(2,b)$ are collected in the following proposition which is an easy consequence of the classical Macaulay Theorem.

We recall that  given two positive integer $n$ and $c$,  the $n$-binomial expansion of c is $$c=\binom{c_n}{n}+\binom{c_{n-1}}{n-1}+\cdots \binom{c_j}{j}$$ where $c_n>c_{n-1}>\cdots c_j \ge j\ge 1.$ We let
$$c^{<n>}=\binom{c_n+1}{n+1}+\binom{c_{n-1}+1}{n}+\cdots \binom{c_j+1}{j+1}.$$
The Theorem of Macaulay states that a numerical function $\{h_0,h_1,\cdots,h_i,\cdots,\}$ is the Hilbert function of a standard graded algebra if and only if $h_0=1$ and $h_{i+1}\le h_i^{<i>}$ for every $i\ge 1.$
We remark that if $n+1\le c \le 2n$ then the  $n$-binomial expansion of c is $$c=\binom{n+1}{n}+\binom{n-1}{n-1}+\cdots \binom{2n-c+1}{2n-c+1},$$ so that  $c^{<n>}=c+1.$

Further, if $f_1,\dots,f_r$ are elements of order $d_1,\dots,d_r$ in the regular local ring  $(R,\M)$ and $J$ the ideal they generate,  it is known that $$J^*_n=(J\cap\M^n+\M^{n+1})/\M^{n+1}$$ and $$(f_1^*,\dots,f^*_r)_n=(\sum_{i=1}^r\M^{n-d_i}f_i+\M^{n+1})/\M^{n+1}$$ for every non negative integer $n.$
With this notation we have the following basic lemma.

\smallskip
\begin{lemma}\label{base} Let $I=(f,g)$ be an ideal of $(R,\M)$ with $order(f)=2\le order(g)=b.$ Then
\begin{enumerate}
\item[(i)]  $I^*_{\ j}=(f^*)_j$ for every integer $2\le j<b.$

\item[(ii)] $I^*_{\ b}=(f^*,g^*)_b.$

\item[(iii)] If $g^*\notin (f^*)$ then $I^*_{\ b+1}=(f^*,g^*)_{b+1}.$
\end{enumerate}
\end{lemma}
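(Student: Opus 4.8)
The plan is to run everything through the two displayed identities recorded just before the lemma: identifying $\M^n/\M^{n+1}$ with $P_n$, one has $I^*_n=(I\cap\M^n+\M^{n+1})/\M^{n+1}$, which says concretely that $I^*_n$ is the $\res$-subspace of $P_n$ consisting of the degree-$n$ homogeneous parts $h_n$ of all power series $h\in I$ with $\order(h)\ge n$; likewise $(f^*)_n$ and $(f^*,g^*)_n$ are computed from $\M^{n-2}f$ and $\M^{n-b}g$ modulo $\M^{n+1}$. In all three parts the inclusion $\supseteq$ is immediate from $f,g\in I$, so the only content is the reverse inclusion. My uniform strategy is: take $h\in I$ with $\order(h)\ge n$, write $h=pf+qg$, and analyze $h_n=(pf)_n+(qg)_n$ via the orders of $p$ and $q$, using that $P$ is a domain so that $\order(pf)=\order(p)+2$ and the lowest-degree form of $pf$ is $p^*f^*$ whenever $p\neq 0$.

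For (i) and (ii) I would first note that, since $\order(g)=b$, the product $qg$ contributes nothing in degrees $<b$ and only the scalar multiple $q_0g^*$ in degree $b$. The vanishing $h_i=0$ for $i<\order(h)$ forces $(pf)_i=0$ for all $i<n$, hence $\order(pf)\ge n$; consequently $(pf)_n$ is either $0$ (if $\order(pf)>n$) or $p_{n-2}f^*\in(f^*)$ (if $\order(pf)=n$). For $n=j<b$ there is no $g$-term, giving $h_j\in(f^*)_j$ and hence (i); for $n=b$ one additionally picks up $q_0g^*$, giving $h_b\in(f^*,g^*)_b$ and hence (ii). Neither part uses any hypothesis on $g^*$.

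Part (iii) is where $g^*\notin(f^*)$ must enter, and this is the step I expect to be the crux. Take $h\in I$ with $\order(h)\ge b+1$ and write $h=pf+qg$; as before $\order(pf)\ge b$. The decisive move is to read off the degree-$b$ equation $h_b=(pf)_b+q_0g^*=0$. If we had $\order(pf)=b$, then $(pf)_b=p_{b-2}f^*\neq 0$, so $q_0g^*=-p_{b-2}f^*\neq 0$ would force $q_0\neq 0$ and therefore $g^*\in(f^*)$, contradicting the hypothesis. Hence $\order(pf)\ge b+1$, which makes $(pf)_b=0$, and then $q_0g^*=0$ yields $q_0=0$ because $g^*\neq 0$.

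Once $q_0=0$ is secured, the rest is routine bookkeeping in degree $b+1$: writing $q=q_1+q_2+\cdots$, the only contribution of $qg$ in degree $b+1$ is $q_1g^*\in(g^*)$, while $(pf)_{b+1}$ is again either $0$ or $p_{b-1}f^*\in(f^*)$ since $\order(pf)\ge b+1$. Summing gives $h_{b+1}=p_{b-1}f^*+q_1g^*\in(f^*,g^*)_{b+1}$, the inclusion needed for (iii). Thus the whole difficulty is isolated in the degree-$b$ cancellation argument: the assumption $g^*\notin(f^*)$ is exactly what rules out a genuine order drop $\order(pf)=b$ balanced by a leading term $q_0g^*$, and it is used nowhere else.
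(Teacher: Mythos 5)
Your proof is correct and is essentially the paper's argument rendered component-wise: the paper establishes the ideal identities $(f,g)\cap\M^j+\M^{j+1}=f\M^{j-2}+\M^{j+1}$ for $j<b$, $(f,g)\cap\M^b=(g)+f\M^{b-2}$, and $(f,g)\cap\M^{b+1}=f\M^{b-1}+g\M$, with the same crux in (iii) --- a unit coefficient $d$ on $g$ would give $g\in((f)+\M^{b+1})\cap\M^b=f\M^{b-2}+\M^{b+1}$, i.e.\ $g^*\in(f^*)$. Your degree-$b$ cancellation forcing $q_0=0$ is exactly the paper's case distinction $d\in\M$ versus $d\notin\M$, so the two proofs coincide in substance.
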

\begin{proof} Since $j+1\le b$ we have $g\in \M^b\subseteq \M^{j+1}\subseteq \M^j$, hence $$(f,g)\cap\M^j+\M^{j+1}=(g)+(f)\cap\M^j+\M^{j+1}=f\M^{j-2}+\M^{j+1}.$$ The first assertion  follows.
We prove now (ii). We have:$$(f,g)\cap\M^b=(g)+(f)\cap \M^b=(g)+f\M^{b-2}.$$

As for (iii) we need to prove that if $g^*\notin (f^*)$ then $(f,g)\cap\M^{b+1}=f\M^{b-1}+g\M.$ The inclusion $\supseteq$ is clear, so let  $\alpha=cf+dg\in \M^{b+1}.$ If $d\in \M$ then $cf\in \M^{b+1}$ and this implies $c\in \M^{b-1}$ as required. If $d\notin \M$ then $g\in ((f)+\M^{b+1})\cap \M^b=\M^{b+1}+f\M^{b-2}$
which implies  $g^*\in (f^*),$ a contradiction.
\end{proof}

\smallskip
\begin{proposition}
\label{base1}
Let $A=R/I$ be a local ring of
type $(2,b)$ and $I=(f,g)$ with $\order(f)=2$,  $\order(g)=b$ and $g^* \not \in (f^*).$
Then the following properties hold.

\begin{enumerate}

\item[(i)] $HF_A(j)= 2j + 1$  if $j < b.$

\item[(ii)] $HF_A(b)=2b.$

\item[(iii)] $HF_A(j-1)\le HF_A(j)\le HF_A(j-1)+1$ if $j\ge b.$
\item[(iv)] $HF_A $ admits at most $b-1$ flats.
\end{enumerate}
\end{proposition}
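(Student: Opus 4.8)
The plan is to work throughout with the associated graded ring, using the identity $HF_A = HF_{P/I^*}$ for $P = \res[x,y,z]$ and reading the graded pieces of $I^*$ off \lemref{base}. Since $P$ is a polynomial ring over a field it is a domain, so the nonzero degree-two form $f^*$ is a nonzerodivisor; multiplication by $f^*$ therefore identifies $P_{j-2}$ with $(f^*)_j$, whence $\dim_\res (f^*)_j = \dim_\res P_{j-2} = \binom{j}{2}$. For part (i), $I \subseteq \M^2$ forces $I^*_{\,0} = I^*_{\,1} = 0$, giving $HF_A(0)=1$ and $HF_A(1)=3$, while for $2 \le j < b$ part (i) of \lemref{base} yields $I^*_{\,j} = (f^*)_j$ and hence
$$HF_A(j) = \binom{j+2}{2} - \binom{j}{2} = 2j+1 .$$
For part (ii), \lemref{base}(ii) gives $I^*_{\,b} = (f^*,g^*)_b = f^* P_{b-2} + \res\, g^*$; since $g^* \notin (f^*)$ we have $g^* \notin f^* P_{b-2}$, so $\dim_\res I^*_{\,b} = \binom{b}{2}+1$ and $HF_A(b) = \binom{b+2}{2} - \binom{b}{2} - 1 = 2b$.

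For part (iii) the lower bound $HF_A(j-1) \le HF_A(j)$ is the non-decreasing property recalled above for one-dimensional Cohen--Macaulay rings of embedding dimension three. The upper bound is the delicate point, and I would derive it from Macaulay's theorem via the remark that $c^{<n>} = c+1$ whenever $n+1 \le c \le 2n$. To enter that range I would first prove, by induction on $j \ge b-1$, the a priori linear bound $HF_A(j) \le j+b$: the base cases $j = b-1$ and $j = b$ are parts (i) and (ii), and in the inductive step I combine $HF_A(j) \le j+b \le 2j$ (valid as $j \ge b$) with $HF_A(j) \ge j+1$, the latter coming from the classical inequality $HF_A(j) \ge \min\{j+1,e\}$ as long as the multiplicity $e$ has not yet been reached; Macaulay then gives $HF_A(j+1) \le HF_A(j) + 1 \le (j+1)+b$. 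Once this estimate is available, for $j \ge b+1$ the same remark applied at index $j-1$ produces $HF_A(j) \le HF_A(j-1)+1$, and the only remaining case $j = b$ is the direct equality $2b = (2b-1)+1$ supplied by (i) and (ii). When $HF_A$ has already reached $e$, both neighbouring values equal $e$ and the claim is trivial.

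For part (iv) I would locate the flats explicitly. By (i) and (ii) the values on $0 \le j \le b$ are $1,3,\dots,2b-1,2b$, strictly increasing, so no flat occurs at a position $n < b$. For $n \ge b$ the increments lie in $\{0,1\}$ by (iii), and the function rises from $HF_A(b) = 2b$ to the multiplicity $e$, first attained at the reduction number $r$ and constant afterwards; since the steps are $0$ or $1$ it assumes every integer value between $2b$ and $e$. Hence among the $r-b+1$ positions $b,\dots,r$ there are exactly $e-2b+1$ distinct values, so the number of flats equals $(r-b+1)-(e-2b+1) = r+b-e$. It then suffices to show $r \le e-1$: as $HF_A(r-1) < e$, the inequality $HF_A(r-1) \ge \min\{r,e\}$ forces $\min\{r,e\} \le e-1$ and so $r \le e-1$, giving at most $b-1$ flats.

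The step I expect to be the main obstacle is the upper bound in (iii). Macaulay's theorem alone is far too weak precisely at $j = b$, where $HF_A(b-1) = 2b-1$ sits just above the band $n+1 \le c \le 2n$ and the Macaulay bound $c^{<n>}$ is quadratic in $b$ rather than $c+1$; this is exactly why the sharp value $HF_A(b) = 2b$ from (ii) has to be inserted by hand. The purpose of the auxiliary estimate $HF_A(j) \le j+b$ is to force every later index into the linear band where $c^{<n>} = c+1$ is available, and checking that this estimate propagates through the induction is the technical core of the argument.
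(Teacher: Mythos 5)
Your proposal is correct and follows essentially the same route as the paper: parts (i) and (ii) read the graded pieces off Lemma~\ref{base} exactly as the authors do, part (iii) is the paper's induction with the linear estimate $HF_A(j)\le j+b\le 2j$ (implicit in the paper's inductive hypothesis) feeding the remark that $c^{<j>}=c+1$ for $j+1\le c\le 2j$, and part (iv) is the same $0/1$-step counting, with your identity (number of flats $=r+b-e\le b-1$ via $r\le e-1$) arithmetically equivalent to the paper's $HF_A(j)=j+b-p\ge j+1$. Your explicit handling of the post-multiplicity case via the stabilization property and your pinpointing of why $j=b$ must be inserted by hand are sound refinements of detail, not a different method.
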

\begin{proof} By (i) of the above Lemma we have for every $j<b$ $$HF_A(j)=HF_{P/I^*}(j)=HF_{P/(f^*)}(j)=2j+1.$$

We prove now the second assertion. By (ii) of the above Lemma we have $$ HF_A(b)=HF_{P/I^*}(b)=HF_{P/(f^*,g^*)}(b).$$
Since $g^* \not \in (f^*)$ we get $HF_A(b)=HF_{P/(f^*)}(b)-1=2b+1-1=2b$ as required.

As for (iii) we need only to prove that $HF_A(j)\le HF_A(j-1)+1$ if $j\ge b.$ We have $HF_A(b)=2b,$ $HF_A(b-1)=2b-1,$ hence we can argue by induction on $j.$ Let $j\ge b$ and assuming $HF_A(j)\le HF_A(j-1)+1$ we need to prove that $HF_A(j+1)\le HF_A(j)+1.$

We have $j+1\le HF_A(j)<HF_{P/(f^*)}(j)=2j+1,$ hence, by the remark before the Lemma, we get $$HF_A(j+1)\le HF_A(j)^{<j>}= HF_A(j)+1$$ as wanted.

Finally we prove (iv). We have $HF_A(b)=2b$ and at each step $HF_A$ goes up at most by one. Hence, if there are p flats between b and j, we have $HF_A(j)=2b+j-b-p.$ But $HF_A(j)\ge j+1,$ so that $p\le b-1.$
\end{proof}

\smallskip
 From the above proposition  it follows that the Hilbert function of a local ring of type $(2,b)$ either  is strictly increasing or it  has one or more   flats  (no more than $b-1$); if the first is the case, it has the following shape

\begin{equation} \label{increasing}
 HF_A(j)=
\begin{cases}
          2j+1 & \ \ \text{$j=0, \dots, b-1$}, \\

     j+b & \ \ \text{$b\le j\le e-b,$}\\

	     e & \ \  \text{$ j\ge e-b+1.$}\\
\end{cases}
\end{equation}
where $e$ and $b$ are integers, $b\ge 2$ and $e\ge 2b.$

We show with  the following  example that given a numerical function $H$ as in (\ref{increasing}) we can find a local ring of type $(2,b)$ with multiplicity $e$ whose Hilbert function is $H.$

\begin{example}\label{ex} Let $b\ge 2$ and $e\ge 2b.$ We claim that the above numerical  function is the Hilbert function of the following local ring of type $(2,b)$ and multiplicity $e.$

Let $I=(x^2+y^{e-2b+2},xy^{b-1})$  and  $A=\ser3/I.$ We fix an ordering on the monomials of $P$ with the property that $x>y.$  We let $f:=x^2+y^{e-2b+2},\  g:=xy^{b-1}$ and claim that $\lt{I}=(x^2,xy^{b-1},y^{e-b+1}).$

Since $e\ge 2b$ and $x>y$ it is clear that $\lt{f}=x^2.$ We have $$S(f,g)=y^{b-1}f-xg=y^{b-1}(x^2+y^{e-2b+2})-xxy^{b-1}=y^{e-b+1}.$$  Let $h:=S(f,g)=y^{e-b+1},$  then $$S(f,h)=y^{e-b+1}f-x^2h=y^{e-b+1}( x^2+y^{e-2b+2})-x^2y^{e-b+1}=y^{2e-3b+3}=y^{e-2b+2}h$$ and $S(g,h)=0.$ It follows that
\begin{eqnarray*}
 \NF(S(f,g) \ | \{h\} )  &=&\NF(h \ | \{h\})\;  = \; 0,  \\
  \NF(S(f,h) \ | \{h\} ) &=&\NF(y^{e-2b+2}h \ | \{h\}) \;  = \; 0, \\
  \NF(S(g,h) \ | \{h\} ) &=& \NF(0 \ | \{h\}) \; = \;  0.
\end{eqnarray*}

\noindent
By Buchberger criterion we get that $\lt{I}=(x^2,xy^{b-1},y^{e-b+1})$  as claimed. With the aim of a simple computation we can prove that $K[x,y,z]/(x^2,xy^{b-1},y^{e-b+1})$ has the above Hilbert function; clearly the same is true   for the  local ring $\ser3/(x^2+y^{e-2b+2},xy^{b-1}).$
\end{example}

\smallskip
We end this section by proving that for a local ring of type $(2,b)$ the condition that the Hilbert function is strictly increasing is equivalent to the Cohen-Macaulayness of the tangent cone.
First we need to prove that  the property  of  having  type $(a,b)  $  can be  carried on the quotient  modulo  a suitable superficial  element. We recall that an element  $\ell \in {\mathcal M} $ is superficial for $  {\mathcal M}/I$ if $\ell$ does not belong to any of the associated primes  of $I^*  $ different from  the homogeneous  maximal ideal.
Since the residual field if infinite the existence of superficial elements is guaranteed.
Moreover, it is easy to prove:

\smallskip
\begin{proposition} \label{*}
Let $I$ be an ideal of $R$  of type $(a,b)$ with $2\le a\le b.$
There exists $\ell \in {\mathcal M}\setminus {\mathcal M}^2$  such that
\begin{enumerate}
\item[(i)]
the coset of $\ell$ in $R/I$ is superficial for ${\mathcal M}/I$,

\item[(ii)]
$  \bar I= I+ (\ell)/ (\ell) $ is an ideal of $R/(\ell)$ of type $(a,b).$
\end{enumerate}
\end{proposition}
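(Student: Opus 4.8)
The plan is to choose $\ell$ to be a sufficiently generic linear form, i.e. a generic element of $\M\setminus\M^2$ with $\ell=\ell^*$, and to verify that each requirement in (i) and (ii) is a Zariski-open dense condition on $\ell$; since $\res$ is infinite, finitely many such conditions can be imposed simultaneously. Write $I=(f,g)$ with $\order(f)=a$, $\order(g)=b$ and $g^*\notin(f^*)$, and set $P=\res[x,y,z]$ so that $gr_\m(A)=P/I^*$.

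First I would dispatch (i) together with the auxiliary order conditions. The set $\operatorname{Ass}(P/I^*)$ is finite, and since $\dim A=1$ every associated prime $\mathfrak p\neq\M$ meets the space of linear forms $P_1$ in a proper subspace; the union of these finitely many proper subspaces is not all of $P_1$, so a generic $\ell$ avoids it, giving (i). I would further require that $\ell$ divide neither $f^*$ nor $g^*$ (each condition excludes only the finitely many linear factors), so that, passing to $S:=R/(\ell)\cong\res[\![u,v]\!]$, the images satisfy $\order(\bar f)=a$ and $\order(\bar g)=b$, with $\bar f^*=\overline{f^*}$ and $\bar g^*=\overline{g^*}$, the reductions modulo $\ell$ inside $P/(\ell)\cong\res[u,v]$.

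Next, the regular-sequence part of (ii) is automatic once (i) holds. A superficial element of the one-dimensional Cohen--Macaulay ring $A$ is a nonzerodivisor, so $S/\bar I=A/\ell A$ is Artinian; hence $\bar I=(\bar f,\bar g)$ has height two in the two-dimensional regular local ring $S$, and therefore, by the Cohen--Macaulayness of $S$, the pair $\{\bar f,\bar g\}$ is a regular sequence generating $\bar I$.

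The heart of the matter, and the step I expect to be the main obstacle, is the non-containment $\bar g^*\notin(\bar f^*)$ in $gr_\m(S)=\res[u,v]$. Since $(\bar f^*)=(\overline{f^*})$ is principal and $\deg\overline{g^*}=b\ge a=\deg\overline{f^*}$, this says exactly that $\overline{f^*}\nmid\overline{g^*}$ in $\res[u,v]$, equivalently $g^*\notin(f^*)+(\ell)$ in degree $b$. Because $P$ is a UFD and $(f^*)$ is principal, the hypothesis $g^*\notin(f^*)$ means $f^*\nmid g^*$. I would then argue, by a generic line-section (Bertini-type) argument over the algebraically closed field $\res$, that non-divisibility is preserved under restriction to a generic line $V(\ell)\subset\mathbb P^2$: some irreducible factor of $f^*$ occurs in $f^*$ to higher multiplicity than in $g^*$, and for generic $\ell$ the points where that factor meets $V(\ell)$ are simple and avoid the corresponding part of $V(g^*)$, so that $\operatorname{div}(\overline{f^*})\not\le\operatorname{div}(\overline{g^*})$ on $V(\ell)\cong\mathbb P^1$ and hence $\overline{f^*}\nmid\overline{g^*}$. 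As this is again an open dense condition on $\ell$, it can be imposed jointly with the previous ones, completing the choice of $\ell$ and establishing (ii).
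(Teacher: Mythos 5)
Your proof is correct, and although it shares the paper's overall strategy (impose finitely many Zariski-open conditions on a linear form $\ell$ and use that $\res$ is infinite), it takes a genuinely different route at the crucial step, namely showing $g^*\notin(f^*,\ell^*)$. The paper makes one choice do all the work: since $\depth P/(f^*,g^*)\ge 1$, it picks $\ell^*$ to be a $P/(f^*,g^*)$-regular linear form (in addition to avoiding the associated primes of $I^*$); then $\ell^*\nmid f^*$ is immediate, and $g^*\in(f^*,\ell^*)$ is refuted by a short algebraic computation: from $g^*=\alpha f^*+\beta\ell^*$, regularity forces $\beta\in(f^*,g^*)$, whence $g^*(1-\ell^*\beta_1)\in(f^*)$, contradicting $g^*\notin(f^*)$. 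You instead observe that, since $(f^*)$ is principal, the hypothesis is plain non-divisibility in the UFD $\res[x,y,z]$, and you check it survives restriction to a generic line by a Bertini-type divisor count: an irreducible factor $q$ of $f^*$ with $v_q(f^*)>v_q(g^*)$ contributes, at a simple point of $V(q)\cap V(\ell)$ avoiding the residual part of $V(g^*)$, a strictly larger multiplicity to $\operatorname{div}(\overline{f^*})$ than to $\operatorname{div}(\overline{g^*})$, so $\overline{f^*}\nmid\overline{g^*}$. What each approach buys: the paper's regular-element trick is shorter, characteristic-free, and does not depend on $(f^*)$ being principal; your argument is geometrically transparent but leans on char-$0$ transversality of a generic line with the reduced curve $V(q)$ (which can fail for strange curves in positive characteristic --- harmless here, since the paper fixes $\chara \res=0$ and $\res$ algebraically closed), and, to its credit, it spells out the regular-sequence part of (ii) ($\ell$ superficial hence a nonzerodivisor, $A/\ell A$ Artinian, so $\bar I$ has height two in the $2$-dimensional regular local ring $R/(\ell)$), a point the paper passes over as easy. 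One cosmetic remark: your justification that each associated prime $\mathfrak p\neq\M$ of $I^*$ meets $P_1$ in a proper subspace needs no appeal to $\dim A=1$ --- it holds simply because such a $\mathfrak p$ is homogeneous and does not contain the irrelevant maximal ideal.
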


\begin{proof} It is well known that $\ell $ verifies $(i) $ if   $\ell^*$ does not belong to any of the associated prime ideals  of $I^*$ (different from the homogeneous maximal ideal).  Let   $I=(f,g) $ be with $order (f)=a \le order (g)=b.$ Then it is easy to see that  $\bar I$ satisfies  $(ii) $   provided:

a)  $ \ell^*$ does not divide $f^*$

b)   $ g^* \not \in (f^*, \ell^*)$.

\noindent In fact  $\bar I= (\bar f, \bar g)$  in $R/\ell $ and condition a) assures   $order (\bar f) = a  $ and condition b) gives   $ \bar g ~^* \not \in ( \bar f~ ^*).$   Since $\depth P/(f^*,g^*) \ge 1  $ ($P=\res[x,y,z]$), it is easy to see that for having a) and b)     it is enough to choose $\ell \in {\mathcal M}\setminus {\mathcal M}^2$ such that $\ell^*$ is regular in $P/(f^*, g^*). $
Clearly, if this is the case,   $\ell^* $ does not divide $f^* $ and if  $g^*  \in (f^*, \ell^*), $ then $g^*=\alpha f^* + \beta \ell^* $ with $\alpha, \beta \in P. $ Since  $\ell^*$ is $P/(f^*,g^*)$-regular, then $ \beta \in  (f^*,g^*). $ Hence $g^*= \alpha f^* +\ell^*  (\beta_1 g^* + \beta_2 \ell^*), $ so $g^*(1-\ell\beta_1) \in (f^*), $ a contradiction because $ g^* \not \in (f^*). $
   Since  the residue field is infinite, an element $\ell \in {\mathcal M}\setminus {\mathcal M}^2$ verifying  the conditions of the proposition  can be selected   by avoiding the  associated prime ideals to $I^*$ and to $(f^*, g^*). $
\end{proof}

\smallskip
 It is well known  that if the associated graded ring $gr_{\m}(A) $ is Cohen-Macaulay,  then
  the Hilbert function  of $A$  is strictly increasing. However the converse is in general very rare. In the following result we will show a special case where this implication holds true.

\smallskip
 \begin{proposition}
\label{increasing1}
Let $A=R/I$ be a local ring of
type $(2,b).$ Then $gr_{\m}(A) $ is Cohen-Macaulay if and only if $HF_A$ is strictly increasing.
\end{proposition}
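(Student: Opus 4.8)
The implication ``$gr_\m(A)$ Cohen--Macaulay $\Rightarrow HF_A$ strictly increasing'' is the general fact recalled just before the statement, so I would only prove the converse. Since $\dim gr_\m(A)=1$, it suffices to exhibit a linear nonzerodivisor on $gr_\m(A)=P/I^*$ (here $P=\res[x,y,z]$). The plan is to take the superficial element $\ell$ produced in \propref{*}, so that $\ell^*$ does not divide $f^*$ and $g^*\notin(f^*,\ell^*)$; after a linear change of variables I may assume $\ell^*=z$, and reduction modulo $z$ becomes the quotient $P\to\res[x,y]$. I then want to show that multiplication $\mu_j\colon (P/I^*)_j\xrightarrow{\cdot z}(P/I^*)_{j+1}$ is injective for every $j$.

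Writing $D(m):=\dim_\res\bigl(P/(I^*+(z))\bigr)_m$ for the Hilbert function of the cokernel and reading off the four--term exact sequence $0\to\ker\mu_j\to (P/I^*)_j\to (P/I^*)_{j+1}\to \bigl(P/(I^*+(z))\bigr)_{j+1}\to 0$, one obtains $\dim_\res\ker\mu_j = HF_A(j)-HF_A(j+1)+D(j+1)$, so injectivity at $j$ is equivalent to $D(j+1)=HF_A(j+1)-HF_A(j)$. Because $\dim_\res\ker\mu_j\ge 0$, the inequality $D(j+1)\ge HF_A(j+1)-HF_A(j)$ always holds, and the whole argument reduces to proving the reverse inequality in every degree.

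For the high degrees I would use the reduction number $r=e-b$ (read off from (\ref{increasing})): for $j\ge r$ one has $\m^{j+1}=\ell\m^j$, so $\mu_j$ is surjective, and since it maps between spaces of the same dimension $e$ it is bijective, giving $\ker\mu_j=0$ for free. For the low degrees $0\le j\le r-1$, which are the crux, I would bound $D$ from above and match it against the first difference of $HF_A$, which by (\ref{increasing}) equals $2$ for $j\le b-2$ and $1$ for $b-1\le j\le r-1$. Since the image of $f^*$ is a nonzero quadric in $\res[x,y]$ lying in $I^*+(z)$, one gets $D(m)\le 2$ for all $m\ge 1$; since $g^*\notin(f^*,\ell^*)$, the image of $(f^*,g^*)_b$ already has dimension $b$ in $\res[x,y]_b$ (\lemref{base}(ii)), hence $D(b)\le 1$, and Macaulay's inequality in two variables (where $1^{<m>}=1$) propagates $D(m)\le 1$ to all $m\ge b$. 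Comparing degree by degree gives $D(j+1)\le HF_A(j+1)-HF_A(j)$ throughout $0\le j\le r-1$, forcing equality and $\ker\mu_j=0$; together with the high--degree range this shows that $z$ is a nonzerodivisor, so $gr_\m(A)$ is Cohen--Macaulay.

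The main obstacle is the low--degree step: one has to determine the cokernel Hilbert function $D$ sharply enough in the initial range $m\le b$, where \lemref{base} and \propref{*} apply, and then control it in the middle range purely through Macaulay's bound, so that the a priori inequality from exactness collapses to an equality exactly when $HF_A$ has no flats. The clean point is that torsion in $gr_\m(A)$ is excluded in two complementary ways: below the reduction number by the absence of flats, and at or above it by the reduction $\m^{j+1}=\ell\m^j$ forcing $\mu_j$ to be bijective.
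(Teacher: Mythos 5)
Your proof is correct, and it takes a recognizably different (more self-contained) route than the paper, though the two run on the same fuel. The paper also starts from the superficial element $\ell$ of \propref{*}, but then passes to the Artinian reduction $A/\ell A$: since the image ideal is again of type $(2,b)$ in two variables, $HF_{A/\ell A}$ begins $1,2,\dots,2$ with value $1$ in degree $b$, Macaulay's bound caps the tail at $1$, and the equality $e(A/\ell A)=e(A)$ forces $HF_{A/\ell A}=\Delta HF_A$; Cohen--Macaulayness is then quoted as a black box from Singh \cite{Sin74}. You instead prove regularity of $\ell^*$ on $P/I^*$ directly, degree by degree, which amounts to inlining the proof of the criterion the paper cites: your four-term exact sequence identifies $\dim_\res\ker\mu_j$ with $D(j+1)-\bigl(HF_A(j+1)-HF_A(j)\bigr)$, where $D$ is the Hilbert function of $gr_{\m}(A)/\ell^*gr_{\m}(A)=P/(I^*+(z))$. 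Note that this ring is a priori only a surjective image's source for $gr_{\overline{\m}}(A/\ell A)$ (one has $I^*+(\ell^*)\subseteq (I+(\ell))^*$, with equality exactly when $\ell^*$ is regular), so your upper bounds $D(m)\le 2$, $D(b)\le 1$ concern the possibly larger algebra; they go through because they use only $f^*,g^*\in I^*$, $z\nmid f^*$ and $g^*\notin(f^*,z)$ --- i.e.\ conditions a) and b) inside the proof of \propref{*}, a slightly weaker input than the paper's use of \propref{*}$(ii)$ in full. Both arguments also split at the reduction number in the same place: you invoke $\m^{j+1}=\ell\m^j$ for $j\ge r$ explicitly to get bijectivity of $\mu_j$, while the paper encodes the same information in the multiplicity equality, which pins down the length of the tail of $1$'s. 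What your route buys is independence from \cite{Sin74}; what the paper's buys is brevity and, as a byproduct, the exact Hilbert function of the Artinian reduction. One small nit: citing \lemref{base}$(ii)$ for $D(b)\le 1$ is an over-citation --- that lemma computes $I^*_{\ b}$ exactly, which would matter for a lower bound on $D(b)$, whereas your upper bound needs only the trivial containment $(f^*,g^*)\subseteq I^*$ together with $g^*\notin(f^*,\ell^*)$, which gives $\dim_\res(\overline{f^*},\overline{g^*})_b\ge b$ inside the $(b+1)$-dimensional space $\res[x,y]_b$.
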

\begin{proof}
 Let   $I=(f,g)$ with $\order(f)=2, $ $\order(g)=b $ and $g^* \not \in (f^*).$ If the associated graded ring is Cohen-Macaulay,  then  its Hilbert function is strictly increasing and thus the Hilbert function of $A$ is strictly increasing as well.
Assume   that $HF_A$ is strictly increasing.
From Proposition  \ref{base1},    a simple computation gives
$$
\Delta HF_A(n):=HF_A(n+1)-HF_A(n)=
\begin{cases}
     1 & \ \ \text{$n=0$}, \\
     2 & \ \ \text{$n=1, \dots, b-1$}, \\
     1 & \ \ \text{$n=b,\dots, r-1$},\\
     0 & \ \ \text{$n \ge r$}\\
\end{cases}
$$
with $r=e-b+1$.

From \propref{*}  there exists a  superficial element $x\in A$ such that
$$
HF_{A/xA}(n)=
\begin{cases}
     1 & \ \ \text{$n=0$}, \\
     2 & \ \ \text{$n=1, \dots, b-1$}, \\
     1 & \ \ \text{$n=b$}.\\
\end{cases}
$$
From Macaulay's characterization of Hilbert functions and the fact that $e(A/xA)=e(A), $
we get $\Delta HF_A = HF_{A/xA}$.
 Hence
$gr_{\m}(A)$ is Cohen-Macaulay, \cite{Sin74}.
\end{proof}

\smallskip
Notice that the above proposition cannot be extended to local rings of type $(a,b) $ with $a>2,$  as  the following  example shows.
Consider the  local ring $A=R/I $ where $ I=(x^4, x^2y+z^4) \subseteq R=\ser3;$   $A$ is a  one-dimensional Gorenstein local ring and  $$HF_A=\{1,3,6,9,11,13,14,15,16,16,\dots ,\} $$ is  strictly increasing. Now it is clear that $x^4,x^2y\in I^*$  and since $x^2(x^2y+z^4)-yx^4\in I,$ also $x^2z^4\in I^*.$ This implies that $x^3z^3 (x,y,z)\subseteq I^*;$ since $x^3z^3\notin I^*$ $gr_{\m}(A) $ is not Cohen-Macaulay.

 A natural and general problem  would be to characterize the Hilbert functions of all the ideals  $I$ of type $(2,b) . $ If  the Hilbert function has one or more  flat, the behavior is difficult to control. However if we    denote by $p$ the number of  flats, by Proposition \ref{base1} we know that   $p \le b-1. $ With the aid of huge computations made with CoCoa, we ask the following question.

\smallskip
\begin{question}

Let $A=R/I$ be a local ring of
type $(2,b)$
 with $b \ge 2 $ and multiplicity $e. $  Let $ n := \min\{ j: HF_{R/I} (j) =HF_{R/I}(j+1)< e \}  $ and let $p$ be the number of flats. Then
$$ e \le (p+1) n\ ( \le bn).$$
\end{question}
The main result of the paper answer   the question   in the case $a=b=2.$

\smallskip
\section{The main result}

In this section we present a complete characterization of the numerical functions which are the Hilbert functions   of  local rings  of   type $(2,2). $   In particular we  prove that certain monomial  ideals cannot be the initial ideals of a complete intersection, a relevant  task  even in the graded setting (see for example \cite{CD05}).

By the definition we gave in the above section, a local ring  $A=\ser3/I$ of type $(2,2)$ is of dimension one, $HF_A(1)=3$ and  $HF_A(2)=4.$  In particular   $I$ can be generated by a regular sequence, say $I=(f,g),$  where $f$ and $g$ are power series of order two such that $f^*$ and $g^*$ are linearly independent in the vector space $K[x,y,z]_2.$ We recall that by Proposition \ref{base1}  we have $I^*_{\ 2}=<f^*,g^*>$ and  $I^*_{\ 3}=(f^*,g^*)_3=<f^*x,f^*y,f^*z,g^*x,g^*y,g^*z>.$

Since $HF_A(2)=4$ we know that $4\le HF_A(3)\le HF_A(2)^{<2>}=5.$ If  $HF_A(3)=4,$ then $6=dim(I^*_{\ 3})=\dim_\res <f^*x,f^*y,f^*z,g^*x,g^*y,g^*z>.$ This easily implies that $f^*$ and $g^*$ form a regular sequence in $K[x,y,z].$ As a consequence  $I^*=(f^*,g^*)$ and the Hilbert function of $A$  is $\{1,3,4,4,4,....\}$ which is as in (\ref{increasing}) with $b=2,$ $e=4.$

We want to study the remaining case, when $HF_A(3)=5.$ We first remark that in this case $f^*$ and $g^*$ share a common factor, say $L,$ which must be a linear form because $f^*$ and $g^*$ are linearly independent. Hence we can write $$f^*=LM,\ \ \ \ g^*=LN$$ where $L,M,N$ are linear forms in $K[x,y,z]$
such that $M$ and $N$ are linearly independent. In particular $I^*_{\ 2}=<LM,LN>.$

We have two possibilities, either $L,M,N$ are linearly independent or are linearly dependent.
We remark that this property depends on the ideal $I$ and not on the generators of $I$.
Namely, if we say that $I^*_{\ 2}$ is {\it{square free}}  with the meaning that  it does not contain a square of a linear form, we can prove the following easy result:

\smallskip
 \begin{lemma} With the above assumption, the vectors $L,M,N$ are linearly independent if and only if   $I^*_{\ 2}$ is square-free.
 \end{lemma}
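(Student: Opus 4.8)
The plan is to exploit the factored form of the generators. Since $I^*_{\ 2}=\langle LM,LN\rangle$, every element of this vector space has the shape $L(aM+bN)$ with $a,b\in\res$, and because $M,N$ are linearly independent the linear forms $aM+bN$ range exactly over the two-dimensional space $\langle M,N\rangle$. Thus the whole question reduces to understanding when a product $L\cdot\lambda$, with $\lambda\in\langle M,N\rangle$, can be the square of a linear form; I will show this happens precisely when $L\in\langle M,N\rangle$, which, since $M$ and $N$ are already independent, is the same as $L,M,N$ being linearly dependent. So the lemma follows once this equivalence is in place.

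First I would treat the easy implication. If $L,M,N$ are linearly dependent then, as $M,N$ are independent, we may write $L=\alpha M+\beta N$ for suitable scalars $\alpha,\beta\in\res$. Multiplying by $L$ gives $L^2=\alpha\,LM+\beta\,LN\in I^*_{\ 2}$, exhibiting the square of a nonzero linear form inside $I^*_{\ 2}$; hence $I^*_{\ 2}$ is not square-free. Taking the contrapositive, square-freeness forces $L,M,N$ to be linearly independent.

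For the converse I assume $L,M,N$ linearly independent and suppose, toward a contradiction, that $\ell^2\in I^*_{\ 2}$ for some nonzero linear form $\ell$. Writing $\ell^2=L(aM+bN)$ with $(a,b)\neq(0,0)$, the crux is a divisibility argument in the unique factorization domain $\res[x,y,z]$: a nonzero linear form is irreducible, hence prime, so $L\mid\ell^2$ forces $L\mid\ell$; since $\ell$ is linear this gives $\ell=cL$ with $c\neq0$. Cancelling $L$ in the domain $\res[x,y,z]$ then yields $c^2L=aM+bN$, so $L\in\langle M,N\rangle$, contradicting the independence of $L,M,N$. Therefore no such square exists and $I^*_{\ 2}$ is square-free.

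The only genuinely substantive point is this last divisibility step — passing from $L\mid\ell^2$ to $L\mid\ell$ and then cancelling the factor $L$ — but it is immediate once one invokes that a nonzero linear form is prime in $\res[x,y,z]$; everything else is bookkeeping with the two-dimensional space $\langle M,N\rangle$. I would also be careful to record explicitly that \emph{square-free} here means the absence of squares of \emph{nonzero} linear forms, since the zero form squares trivially into every subspace and must be excluded for the statement to make sense.
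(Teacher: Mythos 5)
Your proof is correct and follows essentially the same route as the paper: the forward implication via $L^2=\alpha LM+\beta LN$ is identical, and your divisibility argument (primality of $L$ forcing $\ell=cL$, then cancelling $L$ to get $L\in\langle M,N\rangle$) is precisely the paper's step ``$P\in(L)$ so $P=\lambda L$, hence $\lambda^2L=\alpha M+\beta N$,'' merely with the UFD justification made explicit. Your closing remark that square-freeness must exclude only squares of \emph{nonzero} linear forms matches the paper's implicit convention, since its proof likewise concludes $P=0$.
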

	  \begin{proof} Let us first assume   that $L,M,N$ are linearly dependent. Since $M,N$ are linearly independent we have $L=\alpha M+ \beta N$ so that $L^2=\alpha LM+ \beta LN\in  <I^*_{\ 2}>.$ Hence $I^*_{\ 2}$ is not square-free.

 We prove now that if  $L,M,N$ are linearly independent then $I^*_{\ 2}$ is  square-free. Let $P$ be a linear form such that $P^2 \in I^*_{\ 2}=<LM,LN>$; then $P\in (L)$ so that $P=\lambda L.$ We have $\lambda^2 L^2=\alpha LM+ \beta LN$ hence $\lambda^2 L=\alpha M+ \beta N;$ since $L,M,N$ are linearly independent this implies $\lambda =0$ and finally $P=0.$
  \end{proof}

\smallskip
For completeness, we need now to recall the notion of k-algebra isomorphism. Given a set of minimal generators $\underline{y}=\{y_1,y_2,...,y_n\}$ of the maximal ideal of $R=\series{n}$, we let $\phi_{\underline{y}}$ be the automorphism of $R$ which is the result    of  substituting $y_i$ for $x_i$  in  a power series $f(x_1,x_2,...,x_n)\in R.$
 Given two ideals $I$ and $J$ in $R$ it is well known that there exist a $K$-algebra isomorphism $\alpha : R/I\to R/J$ if and only if for some generators $y_1,y_2,...,y_n$ of the maximal ideal  of $R$, we have $I=\phi_{\underline{y}}(J).$

  \vskip 2mm We start now by   deforming, up to isomorphism, the generators $f$ and $g$ of the given ideal $I.$

\smallskip
\begin{lemma}
\label{initial}
Let $A=R/I$ be a local ring of  type $(2,2)$ such that $HF_A(3)=5.$
\vskip 3mm
\noindent
$(i)$
If $I^*_{\ 2}$ is not square-free we may assume, up to isomorphism, that $I=(f,g)$ with $f^*=x^2$ and $g^*=x y$.

\vskip 3mm
\noindent
$(ii)$
 If $I^*_{\ 2}$ is  square-free we may assume, up to isomorphism, that $I=(f,g)$ with $f^*=xy$ and $g^*=x z$,
\end{lemma}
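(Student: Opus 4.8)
The plan is to prove both statements by exhibiting an explicit automorphism of $R$ that carries $I$ to an ideal whose generators have the prescribed initial forms, and then invoking the isomorphism criterion recalled above. The one technical fact I will rely on is that a \emph{linear} substitution automorphism commutes with the passage to initial forms: if $\Phi=\phi_{\underline{y}}$ is induced by linearly independent linear forms $y_1,y_2,y_3$, then $\Phi$ sends each homogeneous component of a power series to a homogeneous component of the same degree and is bijective on each graded piece, so for any $f$ of order $2$ one has $(\Phi(f))^*=\Phi(f^*)$. Thus it suffices, in each case, to produce a suitable linear change of coordinates acting on the degree-two initial forms $f^*=LM$ and $g^*=LN$, possibly after first replacing $f,g$ by another pair of order-two generators of $I$.

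For $(ii)$, assume $I^*_{\ 2}$ is square-free. By the preceding Lemma the forms $L,M,N$ are linearly independent, hence a basis of $K[x,y,z]_1$. Choose the linear automorphism $\Phi$ of $R$ determined by $\Phi(L)=x$, $\Phi(M)=y$, $\Phi(N)=z$. Then $\Phi$ induces an isomorphism $R/I\cong R/\Phi(I)$ with $\Phi(I)=(\Phi(f),\Phi(g))$, and by the fact above $\Phi(f)^*=\Phi(LM)=xy$ and $\Phi(g)^*=\Phi(LN)=xz$. Renaming $\Phi(I),\Phi(f),\Phi(g)$ as $I,f,g$ gives the assertion.

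For $(i)$, assume $I^*_{\ 2}$ is not square-free, so that $L,M,N$ are linearly dependent; since $M,N$ are independent we may write $L=\alpha M+\beta N$. Now $I^*_{\ 2}=\langle LM,LN\rangle=L\langle M,N\rangle$ is two-dimensional and contains $L^2=\alpha LM+\beta LN$. As $L\neq 0$ lies in the two-dimensional space $\langle M,N\rangle$, complete it to a basis $\{L,M'\}$ of $\langle M,N\rangle$; then $I^*_{\ 2}=L\langle L,M'\rangle=\langle L^2,LM'\rangle$, and $\{L^2,LM'\}$ is again a basis of $I^*_{\ 2}$ because multiplication by $L$ is injective in the domain $K[x,y,z]$. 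The point is that here a coordinate change alone will not suffice, since $L$ need not be proportional to $M$: I must first switch to generators whose initial forms are $L^2$ and $LM'$.

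To that end, write $L^2=af^*+bg^*$ and $LM'=cf^*+dg^*$; the scalar matrix $\left(\begin{smallmatrix}a&b\\ c&d\end{smallmatrix}\right)$ is invertible, since it expresses one basis of $I^*_{\ 2}$ in terms of another. Setting $h=af+bg$ and $k=cf+dg$, invertibility of this matrix shows $(h,k)=(f,g)=I$, while $h^*=L^2$ and $k^*=LM'$ because the corresponding scalar combinations of $f^*,g^*$ are nonzero of order $2$. Finally, extend $\{L,M'\}$ to a basis $\{L,M',W\}$ of $K[x,y,z]_1$ and take the linear automorphism $\Phi$ with $\Phi(L)=x$, $\Phi(M')=y$, $\Phi(W)=z$; then $\Phi$ induces $R/I\cong R/\Phi(I)$ with $\Phi(h)^*=x^2$ and $\Phi(k)^*=xy$, proving $(i)$. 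The main obstacle is exactly this preliminary change of generators: the verification that $h,k$ still generate $I$ (equivalently, that the two natural bases of the two-dimensional space $I^*_{\ 2}$ are related by an invertible matrix) is what makes the reduction to $x^2,xy$ legitimate, the subsequent coordinate change being routine.
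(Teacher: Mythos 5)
Your proof is correct and takes essentially the same route as the paper's: in case $(ii)$ a linear coordinate change matching $L,M,N$ with $x,y,z$, and in case $(i)$ a preliminary invertible change of generators yielding initial forms $L^2$ and $LM'$ followed by a coordinate change --- the paper does exactly this by taking $\lambda f+\rho g$ and $g$ (so its $N$ plays the role of your $M'$, after assuming $\lambda\neq 0$ by symmetry). The only difference is presentational: you encode the generator change as an invertible $2\times 2$ matrix between two bases of $I^*_{\ 2}$, where the paper writes out the identity $L^2=\lambda f+\rho g-\lambda a-\rho b$ explicitly.
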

\begin{proof}  Let us first assume that $I^*_2$ is not square-free; then $f^*=LM,$ $g^*=LN$ with
$L,M,N$  linearly dependent; since $M$ and $N$ are linearly independent, we must have  $L=\lambda M+\rho N$ for suitable $\lambda$ and $\rho $ in $K$ with $(\lambda, \rho )\neq (0,0).$ By symmetry we may assume $\lambda \neq 0.$  Then it is easy to see that $L$ and $N$ are linearly independent so that we can consider an automorphism $\phi$   sending $x \to L, y\to N.$  We have  $f=LM+a$ and $g=LN+b$ for suitable $a,b \in  \mathcal M^3,$ and further $$L^2=\lambda LM+\rho LN=\lambda f+\rho g-\lambda a-\rho b.$$
We get    $$I=(f,g)=(\lambda f,g)=(L^2-\rho g+\lambda a+\rho b,g)=(L^2+\lambda a+\rho b,g)=$$
$$=(L^2+\lambda a+\rho b,LN+b)
=\phi((x^2+\phi^{-1}(\rho b+\lambda a),xy+\phi^{-1}(b)).$$   The conclusion follows.

Now we assume  that $I^*_{\ 2}$ is  square-free. Then $f^*=LM$ and $g^*=LN$
  where $L, M, N$ are linear forms in  $K[x,y,z]$ which are linearly independent. As before we have    $f=LM+a$ and $g=LN+b$ for suitable $a,b \in  \mathcal M^3,$

 Let us  consider the automorphism $\phi$ sending $x \to L, y \to M, z \to N.$ We have $$I=(f,g)=(LM+a,LN+b)=\phi((xy+\phi^{-1}(a),xz+\phi^{-1}(b)))$$
 and the conclusion follows. \end{proof}

\smallskip
Using Grauert division theorem, we can prove a first useful preparation result in the case $x^2= \lt f$ and $xy=  \lt g.$

\smallskip
 \begin{lemma}
 \label{Grauert}
 Let $A=R/I$ be a local ring of
type $(2,2)$ such that  $I=(f,g),$ $ \lt f=x^2,$  $  \lt g=xy.$  Then we can write
$$
I=(x^2 + a x z^p + F(y,z), xy + b x z^q+ G(y,z))
$$
where $p, q \ge 1$, $a =0$ or $a \in  \mathbb U(\serz),  $   $b =0$ or $b \in  \mathbb U(\serz),  $
$F, G\in \seryz_{\ge 2}.$
\end{lemma}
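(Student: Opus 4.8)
The plan is to apply Grauert's division theorem to the two monomials $x^2$ and $xy$, dividing each by the set $\{f,g\}$, and then to read the required generators directly off the remainders. Since $\lt{f}=x^2$ and $\lt{g}=xy$, a monomial $x^ay^bz^c$ is divisible by neither $x^2$ nor $xy$ precisely when $a=0$, i.e.\ it has the form $y^bz^c$, or when $a=1$ and $b=0$, i.e.\ it has the form $xz^c$; I will call these the \emph{standard} monomials. By the defining property of a normal form, $\NF(-\mid\{f,g\})$ is always supported on standard monomials.

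First I would divide $x^2$, writing $x^2=q_1f+q_2g+r$ with $r=\NF(x^2\mid\{f,g\})$, and set $f':=x^2-r=q_1f+q_2g\in I$. Then $f'$ is supported on $x^2$ together with standard monomials. Since $f$, $g$ (hence $q_1f$, $q_2g$) and $x^2$ all have order at least two, $r$ has order at least two, so the only standard monomials it can involve are $xz^k$ with $k\ge1$ and $y^iz^j$ with $i+j\ge2$. Thus $f'=x^2-x\,h(z)-F(y,z)$ with $h\in z\serz$ and $F\in\seryz_{\ge2}$; splitting off the smallest power of $z$ in $h$ rewrites $-x\,h(z)$ as $a\,xz^p$ with $p\ge1$ and $a\in\mathbb U(\serz)$, or $a=0$ when $h=0$. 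This is the first generator in the desired shape.

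The crucial point is that $f'$ may replace $f$, i.e.\ that $(f',g)=I$, and this is where I expect the real work to lie. I would compare the degree-two homogeneous parts of $x^2=q_1f+q_2g+r$, namely $x^2=q_1(0)f^*+q_2(0)g^*+r_2$, where $q_1(0),q_2(0)$ are the constant terms and $r_2$ is supported on $\{xz,y^2,yz,z^2\}$. The monomial $x^2$ occurs with nonzero coefficient in $f^*$ (as $\lt{f}=x^2$) but in neither $g^*$ nor $r_2$, so comparing the $x^2$-coefficients forces $q_1(0)\neq0$; hence $q_1$ is a unit and $f=q_1^{-1}(f'-q_2g)\in(f',g)$, giving $(f',g)=I$. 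Dividing $xy$ in the same manner produces $g':=xy-\NF(xy\mid\{f,g\})=q_1'f+q_2'g$, of the form $xy+b\,xz^q+G(y,z)$ with $q\ge1$, $b\in\mathbb U(\serz)$ or $b=0$, and $G\in\seryz_{\ge2}$; here the degree-two comparison gives first $q_1'(0)=0$ from the $x^2$-coefficient and then $q_2'(0)\neq0$ from the $xy$-coefficient, so $q_2'$ is a unit and $(f,g')=I$. Passing from $(f,g)$ to $(f',g')$ then proves the lemma.

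The main obstacle is exactly the verification that $q_1$ and $q_2'$ are units, since this is what guarantees that the two preparation steps leave $I$ unchanged; it rests on the degree-two coefficient bookkeeping above, together with the fact that multiplying the order-two series $f,g$ by power series cannot lower the order, which keeps $r$ and $r'$ free of constant and linear terms. Identifying the standard monomials and factoring a unit out of a one-variable power series are then routine.
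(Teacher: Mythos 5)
Your argument is sound in substance but follows a genuinely different route from the paper's. The paper divides the \emph{tails} $F=f-x^2$ and $G=g-xy$ by the generators, and certifies the needed units directly from condition (2) of Grauert's theorem: since $\lt{\alpha f}\le_{\tauh}\lt{F}<_{\tauh}x^2$, the constant term of the quotient $\alpha$ must vanish, so $1-\alpha$ is a unit and $(1-\alpha)f=x^2+r+\beta g$ replaces $f$; crucially, the paper then performs the \emph{second} division against the already-modified pair $\{x^2+r,g\}$, so the two replacements are sequential and each visibly preserves $I$. You instead divide the monomials $x^2$ and $xy$ themselves by the original pair $\{f,g\}$ and certify the units by degree-two coefficient bookkeeping on initial forms ($q_1(0)\ne 0$ from the $x^2$-coefficient; $q_1'(0)=0$, then $q_2'(0)\ne 0$). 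This is a legitimate and arguably more elementary substitute for the paper's leading-term inequality; note only that excluding $x^2$ from $\supp(g^*)$ silently uses $x^2>_{\tau}xy$, which holds for any term order with $x>y$. The normal-form shape analysis of the remainders (order $\ge 2$, no monomial divisible by $x^2$ or $xy$, hence $a\,xz^p+F(y,z)$ with $p\ge 1$ and $a$ zero or a unit of $\serz$) is identical in both treatments.

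There is, however, one step you assert without proof, and as stated it is not a valid inference: ``passing from $(f,g)$ to $(f',g')$.'' From $(f',g)=I$ and $(f,g')=I$ alone one cannot conclude $(f',g')=I$; for instance with $q_1=q_2=q_1'=q_2'=1$ both displayed equalities hold while $(f',g')=(f+g)\subsetneq I$. What rescues your argument is precisely the relation $q_1'(0)=0$ that you derived only as a stepping stone to $q_2'(0)\neq 0$: the transition matrix $\begin{pmatrix} q_1 & q_2\\ q_1' & q_2'\end{pmatrix}$ has determinant $q_1q_2'-q_2q_1'$ with constant term $q_1(0)q_2'(0)\neq 0$, hence is invertible over $R$, giving $(f',g')=(f,g)=I$. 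Alternatively, run your second division against $\{f',g\}$ rather than $\{f,g\}$ and replace the generators one at a time, which is in effect what the paper does and avoids the issue entirely. With either one-line repair your proof is complete.
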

\begin{proof}
By the assumption  we have   $ f=x^2+ F$ with $ \lt{F}<_{\tauh} x^2$ and $ g=xy +G $ with $ \lt{G}<_{\tauh} xy. $
Applying Grauert's division theorem to the power series $F, f, g$ we get
$$
F= \alpha f + \beta g +r
$$
where $\alpha, \beta, r\in R$, no monomial of $\supp(r)$ is divisible by $x^2$ or $xy$, and
$$
\lt{\alpha f},  \lt{\beta g} \le_{\tauh} \lt{F} <_{\tauh} \lt{f}=x^2.
$$
We can write $\alpha=\sum_{i\ge 0} \alpha_i,$ where, for every $i,$ $\alpha_i $ is a degree $i$ form in $\ser3.$ It is clear that the initial form of $\alpha f=\alpha (x^2+F)$ is $\alpha_0x^2+\alpha_0F_2$ so that
 $\alpha_0= 0,$ otherwise  $
\lt{\alpha f}   =x^2.
$ In particular $1-\alpha$ is a unit. Since

$$(1-\alpha)f =f-\alpha f=x^2+F-\alpha f=  x^2+ r+ \beta g$$
 we get $$
I=(f,g)=(x^2 +r ,g).
$$

We apply now Grauert's Division Theorem to the power series
$G, x^2 +r, f$ where $G=g-xy$ and $\lt{G}<_{\tauh} xy.$ We get
$$g-xy=G=t(x^2+r)+sg+r'$$ where no monomial of $\supp(r')$ is divisible by $\lt{x^2+r}= \lt{x^2+F-\alpha f -\beta g}=x^2$ or by $\lt{g}=xy.$

Since $g=xy+t(x^2+r)+sg+r',$ we get $$g(1-s)=t(x^2+r)+r'+xy$$ and we claim that $1-s$ is a unit. Namely,    $\lt{sg}\le \lt{G}<xy$ and, as before, $$sg=s(xy+G)=s_0(xy+G)+s_1(xy+G)+....$$ This implies  $s_0= 0,$ otherwise  $
\lt{sg}   =xy.$ This proves the claim.

 Now we have $$I=(x^2+r,g)=(x^2+r,(1-s)g)=(x^2+r, t(x^2+r)+r'+xy)=(x^2+r,xy+r'),$$ where  no monomial of $\supp(r)$ and $\supp(r')$ is divisible by $x^2$ or $xy$.

It is easy to see that this implies
\begin{eqnarray*}
  r  &=&  a x z^p + F(y,z)\\
  r' &=&  b x z^q +G(y,z)
\end{eqnarray*}
with $p,q\ge 1$, $F, G\in \seryz_{\ge 2}$,
$a=0$ or $a \in \mathbb U(\serz)$, and
$b=0$ or $b \in \mathbb U(\serz)$.
 \end{proof}

\smallskip
We can prove now the main preparation result.

\smallskip
\begin{theorem}
\label{preslemma}
Let $A=R/I$ be a local ring of
type $(2,2)$ such that $HF_A(3)=5.$
\vskip 2mm \noindent
a) If $I^*_2$ is not square-free then, up to isomorphism, we can write
$$
I=(x^2 + a x z^p + F(y,z), xy + G(y,z))
$$
where $p\ge 2$, $a\in\{0,1\}$,   and $F, G\in \seryz_{\ge 3}.$
\vskip 2mm \noindent
b) If $I^*_2$ is  square-free then, up to isomorphism, we can write
$$I=(x^2 +   x z + F(y,z), x y +  d y z+ \alpha y^r + \beta z^s)$$
where
$F\in \seryz_{\ge 3}, $
$d\in \seryz,$  $d(0,0)=1$,
 $r, s\ge 3$,
$\alpha=0$ or $\alpha \in \mathbb U(\sery)$,
$\beta=0$ or $\beta \in \mathbb U(\serz)$.

\end{theorem}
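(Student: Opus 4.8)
The plan is to obtain both normal forms from \lemref{Grauert}, after first manoeuvring into the configuration $\lt f = x^2$, $\lt g = xy$, and then polishing the generators by a short chain of elementary automorphisms of $R$ together with multiplication of generators by units. Throughout, the hypothesis $HF_A(3)=5$ enters only through \lemref{initial}, which fixes the two leading-form configurations; once we are in the right configuration the argument is pure normalisation.

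For part (a), \lemref{initial}(i) lets me assume $f^*=x^2$, $g^*=xy$, so by \propref{base1} we have $I^*_2=\langle x^2,xy\rangle$ and \lemref{Grauert} applies, giving $I=(x^2+axz^p+F,\ xy+bxz^q+G)$ with $p,q\ge1$, each of $a,b$ either $0$ or a unit of $\serz$, and $F,G\in\seryz_{\ge2}$. Since the Grauert remainders contain no monomial divisible by $x^2$ or $xy$, the leading forms of the two generators lie in $\langle x^2,xy\rangle$, have leading coefficient $1$, and carry no $xy$ (resp. no $x^2$) term, so they are forced to be exactly $x^2$ and $xy$; hence $F,G\in\seryz_{\ge3}$ and, whenever $a\ne0$ (resp. $b\ne0$), $p\ge2$ (resp. $q\ge2$), as otherwise an $xz$-term would survive into a leading form. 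Next I remove $bxz^q$: the substitution $y\mapsto y-bz^q$ (an automorphism because $bz^q\in\M^2$) sends $xy+bxz^q$ to $xy$ and keeps $f$ and the new $G$ inside $\seryz_{\ge3}$. Finally I normalise $a$: if $a=0$ nothing is needed, and if $a$ is a unit I apply $x\mapsto a(z)x$, which sends $f$ to $a^2(x^2+xz^p)+F$ and $g=xy+G$ to $a\,xy+G$; rescaling these generators by the units $a^{-2}$ and $a^{-1}$ leaves $I$ unchanged and produces $x^2+xz^p+\tilde F$ and $xy+\tilde G$ with the coefficient of $xz^p$ now equal to $1$, i.e. the form in (a) with $a\in\{0,1\}$.

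For part (b) the structural point is that the square-free case can be transported into the very same leading-term configuration. By \lemref{initial}(ii) I may assume $f^*=xy$, $g^*=xz$; applying the linear automorphism $\psi\colon x\mapsto x+z,\ y\mapsto x,\ z\mapsto y$ replaces these by $\psi(xy)=x^2+xz$ and $\psi(xz)=xy+yz$, so now $\lt f=x^2$, $\lt g=xy$ and \lemref{Grauert} again applies to $\psi(I)$. Because $I^*_2=\langle x^2+xz,\ xy+yz\rangle$ and the Grauert remainders contain no $x^2$ or $xy$ monomials, the leading form of the first generator is forced to be exactly $x^2+xz$ and that of the second exactly $xy+yz$; hence $p=1$ with $a(0)=1$ and $F\in\seryz_{\ge3}$, while the order-two part of $G$ equals $yz$ and $b=0$ or $q\ge2$. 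Killing the remaining $bxz^q$ by $y\mapsto y-bz^q$ and normalising the coefficient of $xz$ to $1$ by the same automorphism $x\mapsto a(z)x$ followed by unit rescaling, I arrive at $f=x^2+xz+F$ and $g=xy+G$ with $F\in\seryz_{\ge3}$ and $G\in\seryz_{\ge2}$ of order-two part $yz$. To finish I split $G$ according to which variables its monomials involve: the monomials divisible by both $y$ and $z$ give $yz\,d$ with $d\in\seryz$ and $d(0,0)=1$ (from the order-two part); the pure powers of $y$ give $\alpha y^r$ and those of $z$ give $\beta z^s$, where factoring out the lowest power present shows $r,s\ge3$ and $\alpha,\beta$ are each either $0$ or a unit of $\sery$, $\serz$.

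I expect the main obstacle to be bookkeeping rather than a single hard idea: at each substitution one must check that the map is a genuine automorphism of $R$ (invertible linear part), that it preserves membership in $\seryz$ and the relevant orders, and that it does not spoil the leading terms $x^2,xy$ on which the repeated use of \lemref{Grauert}'s normal form depends. The one genuinely clever move is the rescaling $x\mapsto a(z)x$: since $x^2$ scales by $a^2$ while the $x$-linear term $axz^p$ scales by $a\cdot a=a^2$, the ratio of their coefficients is driven to $1$, which is exactly what converts the power-series unit $a$ into the constant $1$ (or $0$) demanded by the statement.
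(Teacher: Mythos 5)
Your proposal is correct and follows essentially the same route as the paper's proof: \lemref{initial} to fix the two leading-form configurations, \lemref{Grauert} to reach $I=(x^2+r,\,xy+r')$ with remainders free of $x^2$- and $xy$-multiples, the degree-two comparison inside $I^*_2$ to force $p,q\ge 2$ in case (a) and $p=1$, $a(0)=1$, order-two part $yz$ in case (b), the shear $y\mapsto y-bz^q$ to delete $bxz^q$, and the final splitting of $G$ into $d\,yz+\alpha y^r+\beta z^s$. The single divergence is your normalization of the unit $a$ via $x\mapsto a(z)x$ followed by rescaling the generators by the units $a^{-2}$ and $a^{-1}$, which is a correct and in fact slightly cleaner variant of the paper's substitutions ($z\mapsto cz$ with $c^p=a$ in case (a), $z\mapsto az$ in case (b)), since it avoids extracting a $p$-th root by Hensel's lemma and hence does not use the algebraic closedness of $\res$ at that step.
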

\begin{proof} By Lemma \ref{initial}, up to isomorphism,  we can find generators $f$ and $g$ of $I$  such that  either $f^*=x^2$ and $g^*=xy$ or
$f^*=xy$ and $g^*=xz.$

Let us first assume that
  $f^*=x^2$ and $g^*=xy$; then $$\lt{f}=Lt_{\tau}(f^*)=Lt_{\tau}(x^2)=x^2$$
 $$\lt{g}=Lt_{\tau}(g^*)=Lt_{\tau}(xy)=xy,$$ so that, as remarked at the end of the proof of  Lemma \ref{Grauert}, we have $I=(x^2+r,xy+r')$ where  no monomial of $\supp(r)$ and $\supp(r')$ is divisible by $x^2$ or $xy.$

 Since $f^*=x^2$ and $g^*=xy, $ we also have $f=x^2+h,$ $g=xy+s$ where $\order(h), \order(s)\ge 3.$
 This implies  $I=(x^2+r,xy+r')=(x^2+h,xy+s)$ and $I^*_2=<x^2,xy>$, the vector space spanned by $x^2$ and $xy.$. Since  the degree 2 component of $r$  is a linear combination of the monomials $xz,y^2,yz,z^2,$ it must be zero, otherwise the leading form of $x^2+r$ cannot be in $I^*_2=<x^2,xy>.$ This proves that  the order of $r$ is at least 3. Exactly in the same way we can prove that this holds  true also for   $r'$.

 It is easy to see that this implies $$r=axz^p+D(y,z),\ \ \ \ \ r'=bxz^q+E(y,z)$$ where
 $p, q \ge 2$, $a =0$ or $a \in  \mathbb U(\serz),$   $b =0$ or $b \in  \mathbb U(\serz),$ and
$D, E\in \seryz_{\ge 3}.$

Now let $\phi$ be  the  automorphism of $\ser3$  defined by
$$x\rightarrow x,\ \ y\rightarrow y -  b  z^q, \ \ z\rightarrow z$$ and let $S:=\phi(D)$ and $T:=\phi(E).$ Then $S, T\in \seryz_{\ge 3}$ and we have  $$\phi(f)=\phi(x^2+r)=\phi(x^2+axz^p+D(y,z))=x^2+axz^p+S(y,z))$$ and $$\phi(g)=\phi(xy+r')=\phi(xy+bxz^q+E(y,z))=x(y-bz^q)+bxz^q+\phi(E(y,z))=xy+T(y,z)).$$

 This implies that, up to isomorphism, we may assume
$$I=(x^2+axz^p+S(y,z),xy+T(y,z))$$ with $p \ge 2$, $a =0$ or $a \in  \mathbb U(\serz),$   $b =0$ or $b \in  \mathbb U(\serz),$ and
$S, T\in \seryz_{\ge 3}.$

Now if $a=0$ we are done, otherwise let $a\neq 0.$ Since the ground field  $\res$ is algebraically closed and $a$ is invertible in $K[[z]]$, a straightforward application of Hensel Lemma  enables us to find an element $c \in R$
 such that $c^p=a.$

 Let us consider the automorphism $\phi:\ser3\to \ser3$  defined by
$$x\rightarrow x,\ \ y\rightarrow y, \ \ z\rightarrow cz.$$ If $F$ and $G$ are  power series in $K[[z]]$ such $\phi(F)=S$ and $\phi(G)=T,$ then
$$\phi(x^2+xz^p+F)=x^2+xc^pz^p+S=x^2+axz^p+S,\  \  \ \ \ \ \ \phi(xy+G)=xy+T.$$
The conclusion easily follows.

We need now to  consider the other case when $f^*=xy,$ $g^*=xz.$ As before we choose  a  monomial order $\tau $ such that  $x >_{\tau}z$ and let $\phi$ be  the  automorphism of $\ser3$  defined by
$$x\rightarrow x+z,\ \ y\rightarrow x, \ \ z\rightarrow y.$$ We have $f=xy+d,$ $g=xz+e$ where $d$ and $e$ have order at least 3. Hence $$\phi(f)=(x+z)x+\phi(d)=x^2+xz+h,\ \ \ \ \phi(g)=(x+z)y+\phi(e)=xy+yz+s$$
where $h:=\phi(d)$ and $s:=\phi(e)$ have order $\ge 3.$ Thus, up to isomorphism, we may assume that $I$ is generated by the power series $
x^2+xz+h$ and $xy+yz+s;$ this implies that $ I^*_2=<x^2+xz,xy+yz>.$

Since $x^2>_{\tau}xz$ and $xy>_{\tau}yz,$ we get
$$\lt{x^2+xz+h}=Lt_{\tau}((x^2+xz+h)^*)=Lt_{\tau}(x^2+xz)=x^2$$  $$\lt{xy+yz+s}=Lt_{\tau}((xy+yz+s)^*)=Lt_{\tau}(xy+yz)=xy$$ and we may use  Lemma \ref{Grauert} to get $$I=(x^2+axz^p+S(y,z),xy+bxz^q+M(y,z))$$ where   $p, q \ge 1$, $a =0$ or $a \in  \mathbb U(\serz),  $   $b =0$ or $b \in  \mathbb U(\serz),  $
$S, M\in \seryz_{\ge 2}.$

 Now let $\alpha:=x^2+axz^p+S(y,z);$ if $p\ge 2$ then
 $\alpha^*=x^2+S(y,z)_2$ is an element of the vector space $ I^*_2=<x^2+xz,xy+yz>,$ a contradiction.  Hence $p=1$ and thus we get $\alpha^*=x^2+a_0xz+S(y,z)_2\in <x^2+xz,xy+yz>.$  This clearly implies $a_0=1$ and $S(y,z)_2=0$, so that the order of $S(y,z)$ is at least 3.

 Now let $\beta:=xy+bxz^q+M(y,z);$ if  $b\neq 0$ and $q=1$ then $b_0\neq 0$ and we have $$\beta^*=xy+b_0xz+M(y,z)_2\in I^*_2=<x^2+xz,xy+yz>,$$ a contradiction. Hence it must be either $b=0$ or $q\ge 2;$ in both cases we have $$\beta^*=xy+M(y,z)_2\in <x^2+xz,xy+yz>$$ which implies $M(y,z)=yz+H(y,z)$ where $H(y,z)$ is a power series in $K[[y,z]]$ with  order at least 3.

 At this point we have $I=(x^2+axz+S(y,z),xy+bxz^q+yz+H(y,z))$ with $a_0=1,$ S and H $\in K[[y,z]]_{\ge 3}$  and either  $b=0$ or $q\ge 2$ .

 Let us consider the  automorphism $\phi$ given by $$x \to x, \ \ y\to y-bz^q,  \ \ z\to z.$$ We get $$
\phi(x^2+axz+S(y,z))=x^2+axz+B(y,z),$$ and
$$\phi(xy+bxz^q+yz+H(y,z))=x(y-bz^q)+bxz^q+(y-bz^q)z+\phi(H)=xy+yz+L(y,z)$$ where $B(y,z):=\phi(S)$ and $L(y,z):=-bz^{q+1}+\phi(H)\in K[[y,z]]_{\ge 3}.$

Hence, up to isomorphism, we may assume $I=(x^2+axz+B(y,z),xy+yz+L(y,z))$ with $a_0=1$ and  $B,L\in K[[y,z]]_{\ge 3}.$ Now it is clear that since $L(y,z)$ has order at least 3, we can write
 $L(y,z)= c  yz +   \alpha y^r + \beta z^s $ with  $c \in K[y,z]_{\ge 1}, $  $r, s\ge 3$,
$\alpha=0$ or $\alpha \in \mathbb U(\sery)$, and
$\beta=0$ or $\beta \in \mathbb U(\serz)$.   Hence we get $I=(x^2+axz+B(y,z), xy+ yz+ c  yz +   \alpha y^r + \beta z^s).$  We let $d:=1+c$ so that $d\in K[[y,z]],$ $d(0,0)=1+c(0,0)=1$ and $$I=(x^2+axz+B(y,z), xy+  dyz +   \alpha y^r + \beta z^s).$$ Finally let us consider the automorphism $\phi$ given by $$x \to x, \ \ y\to y,  \ \ z\to az.$$ Let $F(y,z):=\phi^{-1}(B(y,z))$ and $$f:=x^2+xz+F(y,z)\ \ \ \  g:=xy+\phi^{-1}(d/a)yz+\alpha y^r+\phi^{-1}(\beta/a^s)z^s.$$ Then we get $$\phi(f)=x^2+axz+B(y,z)$$ $$ \phi(g)=xy+(d/a)yaz+\alpha y^r+(\beta/a^s)(a^sz^s)=xy+  dyz +   \alpha y^r + \beta z^s.$$ We remark  that the constant term of the power series $d/a$ is 1 and the power series $\beta/a^s$ is invertible if not zero.  Hence the same holds for $\phi^{-1}(d/a)$ and $\phi^{-1}(\beta/a^s).$ The conclusion follows.
\end{proof}

\smallskip
We recall  that in this section we are assuming that $A=\ser3/I$ is a local ring of type (2,2) such that $HF_A(1)=3,$ $HF_A(2)=4$ and $HF_A(3)=5.$
This implies that if we let $n$ to be  {\bf{the least integer such that}} $HF_A(n)=HF_A(n+1),$ then $n\ge 3.$
Also it is easy to see that $n\le r,$ the reduction number of $A$. The integer $n$ plays a relevant work in the paper.
With the aid of this integer  $n$  and as a consequence of Proposition \ref{base1},   the Hilbert function of a local ring $A$ of type (2,2) and multiplicity $e$ has the following shape:
 \begin{equation} \label{flat} HF_A(t)=
\begin{cases}
     1 & \ \ \text{$t=0$}, \\
     t+2 & \ \ \text{$t=1,\cdots, n$},\\
     t+1 & \ \ \text{$t=n+1,\cdots, e-1$},\\
     e & \ \ \text{$t\ge e$}.\\
\end{cases}
\end{equation}
for some integer $n\le e-2.$
We say that $HF_A$ has a {\bf flat} in position $n$.

It is clear that we have two possibilities, either $e=n+2$ or $e\ge n+3.$ In the first case
   the Hilbert function is  increasing by one up to reach the multiplicity, while in the second case   the Hilbert function has a flat in position $n$ and is increasing by one in all the other positions, before reaching the multiplicity.

We are ready to prove  the main result of this paper. It says that, quite unexpectedly, if the Hilbert function of a local ring of type $(2,2)$ has a flat in position $n,$ then the multiplicity cannot be too big, namely it cannot overcome  $2n.$

First we need this easy Lemma.

\smallskip
\begin{lemma}
\label{preparation}
Let $J\subset P=k[x,y,z] $ be a monomial ideal such that
$x^2, xy\in J.$
If  for some $n\ge 2$  we have $\ HF_{P/J}(n+1)=n+2\  $ and   $ HF_{P/J}(n~+~2)=~n+3,$
 then $x z^n$ is the unique monomial of degree $n+1$ which is in $J$ and not in $(x^2,xy).$

  If we have also $\ HF_{P/J}(n)=n+2, $ then  $J_d=(x^2 , xy)_d$ for all $2\le d\le n$.
 \end{lemma}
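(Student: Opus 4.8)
The plan is to argue purely combinatorially, since $J$ is a monomial ideal containing $(x^2,xy)$. First I would record the standard monomials of $(x^2,xy)$: a monomial $x^a y^b z^c$ lies outside $(x^2,xy)$ exactly when $a\le 1$ and ($a=0$ or $b=0$), so the standard monomials are the $y^b z^c$ (with $a=0$) together with the $x z^c$. In degree $d\ge 1$ there are $d+1$ of the former and one of the latter, giving $HF_{P/(x^2,xy)}(d)=d+2$. Since $(x^2,xy)\subseteq J$ we have $HF_{P/J}(d)\le d+2$, and the difference $d+2-HF_{P/J}(d)$ counts precisely the degree-$d$ monomials lying in $J\setminus(x^2,xy)$. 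The hypotheses $HF_{P/J}(n+1)=n+2$ and $HF_{P/J}(n+2)=n+3$ therefore say that there is exactly one monomial of degree $n+1$, and exactly one of degree $n+2$, in $J\setminus(x^2,xy)$.

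Next I would identify the unique degree-$(n+1)$ monomial $m\in J\setminus(x^2,xy)$. The candidates are the $y^b z^c$ with $b+c=n+1$ and the monomial $xz^n$. The key observation is that multiplication by $z$ sends a standard monomial of $(x^2,xy)$ to another standard monomial of $(x^2,xy)$ (since $z$ is coprime to $x$ and $y$), while $mx,my$ are tested directly for divisibility by $x^2$ or $xy$. If $m=y^b z^c$ with $b\ge 1$, then $my=y^{b+1}z^c$ and $mz=y^b z^{c+1}$ are two distinct degree-$(n+2)$ monomials of $J$, both outside $(x^2,xy)$, contradicting uniqueness. If $m=z^{n+1}$, then $mx=xz^{n+1}$, $my=yz^{n+1}$, $mz=z^{n+2}$ give three such monomials, again a contradiction. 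The only surviving candidate is $m=xz^n$, for which $mx=x^2z^n$ and $my=xyz^n$ lie in $(x^2,xy)$ while only $mz=xz^{n+1}$ remains outside; this is consistent, so $m=xz^n$, which is the first assertion.

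For the second part, the additional hypothesis $HF_{P/J}(n)=n+2=HF_{P/(x^2,xy)}(n)$ together with $J\supseteq(x^2,xy)$ forces $J_n=(x^2,xy)_n$, i.e. no degree-$n$ monomial lies in $J\setminus(x^2,xy)$. I would then propagate downward: if some monomial $m$ of degree $d$ with $2\le d\le n$ lay in $J\setminus(x^2,xy)$, then $mz^{n-d}\in J_n$, and since $z$ is coprime to $x,y$ this product is still outside $(x^2,xy)$, contradicting $J_n=(x^2,xy)_n$. Hence $J_d=(x^2,xy)_d$ for all $2\le d\le n$. I expect the only delicate point to be the bookkeeping in the case analysis of the middle paragraph, namely verifying in each case that the produced multiples are genuinely distinct and genuinely outside $(x^2,xy)$; the recurring principle that multiplication by $z$ preserves the property of being a standard monomial of $(x^2,xy)$ is exactly what drives both halves of the argument.
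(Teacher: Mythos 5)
Your proof is correct. For the first assertion you follow essentially the same route as the paper: the paper also notes that the Hilbert function drop in degree $n+1$ forces a monomial $m\in J\setminus(x^2,xy)$ of degree $n+1$, assumes $m=y^{n+1-j}z^j$, and observes that then $my,mz$ are two distinct degree-$(n+2)$ monomials of $J$ outside $(x^2,xy)$, whence $HF_{P/J}(n+2)\le HF_{P/(x^2,xy)}(n+2)-2=n+2$, contradicting $HF_{P/J}(n+2)=n+3$. Your ``exactly one monomial in each of degrees $n+1$ and $n+2$'' bookkeeping is the same contradiction, phrased by counting standard monomials rather than by a Hilbert function inequality; your split into the cases $b\ge 1$ and $m=z^{n+1}$ is harmless but unnecessary, since $my$ and $mz$ already do the job uniformly for all $m=y^bz^c$. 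For the second assertion, however, you take a genuinely different route. The paper argues upward via Macaulay's growth bound: if $HF_{P/J}(t)\le t+1$ for some $t\le n-1$, then $HF_{P/J}(t+1)\le HF_{P/J}(t)^{<t>}\le (t+1)^{<t>}=t+2$, and iterating yields $HF_{P/J}(n)\le n+1$, contradicting $HF_{P/J}(n)=n+2$; hence $HF_{P/J}(d)=d+2=HF_{P/(x^2,xy)}(d)$ for $2\le d\le n$, which gives $J_d=(x^2,xy)_d$. You instead propagate downward: a hypothetical monomial $m\in J_d\setminus(x^2,xy)_d$ is pushed to degree $n$ via $mz^{n-d}$, and since multiplying by $z$ changes neither the $x$- nor the $y$-exponent, divisibility by $x^2$ or $xy$ is unaffected, so $mz^{n-d}\in J_n\setminus(x^2,xy)_n$ contradicts $J_n=(x^2,xy)_n$. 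Your version is more elementary, avoiding Macaulay's theorem entirely, but it uses monomiality of $J$ in an essential way; the paper's Macaulay argument would apply verbatim to any homogeneous ideal containing $(x^2,xy)$ with these Hilbert function values. Under the lemma's hypothesis that $J$ is a monomial ideal, both arguments are equally valid.
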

\begin{proof}
 Since  $HF_{P/J} (n+1)=n+2< HF_{P/(x^2,xy)}(n+1) =n+3$  there is a monomial $m$ of degree n+1 which is in $J$ and not in $(x^2,xy).$ If $m\neq xz^n$ it should be $m=y^{n+1-j}z^j$ for some $j=0,...,n+1.$ But then the monomials of the vector space $(x^2,xy)_{n+2}$ and $my,mz$ would be linearly independent.
 This implies that $$n+3=HF_{P/J}(n+2)\le HF_{P/(x^2,xy,my,mz)}(n+2)=HF_{P/(x^2,xy)}(n+2)-2=n+2,$$ a contradiction. Hence $m=xz^n.$

 Let us assume that also $\ HF_{P/J}(n)=n+2;$ if for some $t\le n-1$ we have $HF_{P/J}(t)\le t+1,$ then $$HF_{P/J}(t+1)\le HF_{P/J}(t)^{<t>}\le (t+1)^{<t>}=t+2$$ and going on in this way we would have $HF_{P/J}(n)\le n+1,$ a contradiction. It follows that for all $2\le d\le n$ we have $HF_{P/J}(d)=HF_{P/(x^2,xy)}(d)$ and, as a consequence,
 $J_d=(x^2 , xy)_d$ for the same $d$'s.
\end{proof}

\smallskip
\begin{theorem}
\label{theorem1}

Let $A$ be a local ring of type (2,2) and  multiplicity $e$.
  If the Hilbert function of $A$ has a flat in position $n,$  then $e\le 2n.$
\end{theorem}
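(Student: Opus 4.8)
The plan is to pass to the explicit normal forms of \thmref{preslemma} (the Hilbert function, hence $e$ and $n$, are isomorphism invariants) and to convert the statement into a bound on the order of a single power series in $I$ that involves only $y,z$. I work in the situation $\lt{f}=x^2$, $\lt{g}=xy$ supplied by that theorem, writing $f=x^2+axz^p+F$, $g=xy+G$ with $a\in\{0,1\}$, $p\ge 2$, $F,G\in\seryz_{\ge 3}$ (case a; the square-free case b runs in parallel, see the end). Since $HF_A$ has a flat in position $n$ we have $HF_A(n)=HF_A(n+1)=n+2$, and because a flat forces $e\ge n+3$ we also get $HF_A(n+2)=n+3$. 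Thus \lemref{preparation} applies to $J=\lt{I}$ and yields $xz^n\in\lt{I}$ together with $(\lt{I})_d=(x^2,xy)_d$ for all $2\le d\le n$, with $xz^n$ the unique new monomial in degree $n+1$.

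The first step is to reinterpret the multiplicity. Because $x^2,xy,xz^n\in\lt{I}$, the only monomials divisible by $x$ surviving in $R/\lt{I}$ are $x,xz,\dots,xz^{n-1}$, all of degree $\le n$; so for $d\ge n+1$ the space $(R/\lt{I})_d$ is spanned by the surviving monomials in $y,z$ alone. Comparing with the shape \eqref{flat} of $HF_A=HF_{R/\lt{I}}$, one reads off that the monomial ideal $J'=\lt{I}\cap\seryz$ has $J'_{\,d}=0$ for $d\le e-1$ and $\dim_\res J'_{\,e}=1$. Hence $e$ is the least degree carrying a monomial of $\lt{I}$ lying in $\seryz$; in particular every nonzero $\theta\in I\cap\seryz$ satisfies $\order(\theta)\ge e$, since $\lt{\theta}\in J'$ has degree $\order(\theta)$. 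It therefore suffices to exhibit one nonzero element of $I\cap\seryz$ of order at most $2n$.

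Such an element comes from eliminating $x$: regarding $f,g$ as polynomials in $x$ over $\seryz$, their resultant $\psi:=\operatorname{Res}_x(f,g)\in I\cap\seryz$, and $\psi\ne 0$ exactly because $\{f,g\}$ is a regular sequence (so $f,g$ have no common factor). When $a=0$ one checks directly that $\psi=G^2+y^2F=y^2f-g^2+2Gg$, displaying membership in $I$ and in $\seryz$ simultaneously; in general $\psi=G^2-ayz^pG+y^2F$. Setting $y=0$ annihilates every term of $g$ except $G(0,z)$, so
\[
\psi(0,z)=\operatorname{Res}_x\!\bigl(f(0,z),\,G(0,z)\bigr)=G(0,z)^2 .
\]

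The last and decisive step is to show $\order(\psi)\le 2n$, and the crux is the identity $\order\bigl(G(0,z)\bigr)=n$. Granting it, $G(0,z)=cz^n+\cdots$ with $c\ne 0$, so $\psi(0,z)=c^2z^{2n}+\cdots$; since every remaining term of $G^2-ayz^pG+y^2F$ is divisible by $y$, the pure power $z^{2n}$ occurs in $\psi$ with coefficient $c^2\ne 0$, whence $\order(\psi)\le 2n$ and $e\le\order(\psi)\le 2n$. To prove $\order\bigl(G(0,z)\bigr)=n$ I would track how the leading term $xz^n$ is produced in a standard basis: starting from $S(f,g)=yf-xg$ and reducing its $(xy)$-divisible part modulo $g$ by Grauert's theorem, the part of $S(f,g)$ divisible by $x$ collapses to $-xG(0,z)$ modulo terms in $\seryz$, so its leading monomial is $xz^{\order G(0,z)}$; by the uniqueness clause of \lemref{preparation} this must be the new degree-$(n+1)$ generator $xz^n$. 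I expect this reduction bookkeeping — separating the $x$-divisible and the pure parts, and ruling out a lower-order pure term (which would contradict $(\lt{I})_d=(x^2,xy)_d$ for $d\le n$) — to be the main obstacle, and the case $a=1$ together with case b (where $g(0,z)=\beta z^s$, so the same computation gives $n=s$ and $\psi(0,z)=\beta^2 z^{2s}$, the subcase $\beta=0$ being treated separately) demand the same care.
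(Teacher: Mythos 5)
Your proposal is correct in substance, and while its crux coincides with the paper's, it finishes by a genuinely different route. Both arguments hinge on the same key fact: the flat forces $g=xy+G$ to satisfy $\order\bigl(G(0,z)\bigr)=n$ (in the paper this is the identity \eqref{g}, $g=xy+\alpha z^n+yH$ with $\alpha\in\mathbb U(\serz)$), proved exactly as you sketch, by computing $h=\NF(S(f,g)\,|\,\{f,g\})=M(y,z)-\alpha xz^{c}$ where $G=yH+\alpha z^{c}$, and matching $\lt{h}$ against the monomial $xz^n$ supplied by \lemref{preparation}. Here your sketch leaves one necessary ingredient implicit: besides excluding a pure leading term of degree $\le n$ (which you do, via $(\lt{I})_d=(x^2,xy)_d$), you must also exclude $\order(h)\ge n+2$, i.e.\ the scenario $G(0,z)=0$ or $\order(G(0,z))>n$ with $xz^n$ produced only at a later stage of the completion; the paper closes this by observing that along Buchberger's algorithm the orders of newly added normal forms never decrease, so no element of the completed standard basis could acquire the leading term $xz^n$, which is a minimal generator of $\lt{I}$ of degree $n+1$ (no proper divisor of $xz^n$ lies in $\lt{I}$ by \lemref{preparation}) --- this is precisely the piece your ``bookkeeping'' must contain. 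Past the crux the routes diverge: the paper sets $J=I+(y)=(x^2+axz^p+F(0,z),\,z^n,\,y)$ and uses that $\bar y$ is a parameter of $A$ to get $e\le\length(R/J)\le\length\bigl(R/(x^2,z^n,y)\bigr)=2n$, whereas you eliminate $x$ by the resultant $\psi=\operatorname{Res}_x(f,g)=G^2-ayz^pG+y^2F\in I\cap\seryz$, note that $\psi(0,z)=G(0,z)^2$ has order $2n$ so $\order(\psi)\le 2n$, and pair this with your correct observation that the shape of the Hilbert function forces every nonzero element of $I\cap\seryz$ to have order at least $e$. Your finish stays entirely inside leading-term combinatorics and avoids the multiplicity inequality $e\le\length(A/yA)$; in fact your $\psi$ coincides with the element $S(h,g)=\alpha^2z^{2n}+y\,(2\alpha z^nH-a\alpha z^{n+p}+yL)$ that the paper later exploits in the proof of \thmref{conjecture22}, so your route also pre-computes the fourth standard-basis element. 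Two small remarks: your appeal to the regular sequence for $\psi\neq 0$ is sound but unnecessary, since once $\order(G(0,z))=n$ is known one has $\psi(0,z)=G(0,z)^2\neq 0$ directly; and in your case b the subcase $\beta=0$ need not be ``treated separately'' --- it is vacuous under the flat hypothesis, being excluded by the crux itself, since $G(0,z)=\beta z^{s}$ must have finite order $n$.
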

\begin{proof}   As usual, we consider a monomial ordering $\tau$ on the terms of $K[x,y,z] $ such that $ x>_{\tau}z.$ In order to cover both case a) and b) in   Theorem \ref{preslemma},  we may assume   $I=(f,g)$  where
$$
f:=x^2 + a x z^p + F(y,z) \ \ \ \  g:=xy +G(y,z)
$$ are power series such that $p\ge 1$ and $a=0$ or $a\in \mathbb U(\serz).$

Since it is clear that  $(x^2,xy)\nsubseteq \lt{ I}$, the elements $f$ and $g$ are not a standard basis for $I;$
thus, by Buchberger's criterion,   we should have
$$
h:=\NF(S(f,g),\{f,g\})\neq 0.
$$
It is clear that $h\in I$ and if we let   $m:=\lt{h}=\rm{Lt}_{\tau}(h^*)$, then $m\in \lt{I}$ and by 1.6.4 in \cite{Sin74}  $m    \notin (x^2,xy).$ We claim that $m$ is a monomial of degree $n+1.$

 Namely, by  the second statement of \lemref{preparation} applied to the monomial ideal $\lt{I},$ it is clear that $m$ has degree at least $n+1.$ Let us assume that $m$ has degree $\ge n+2,$ so that $\order(h)\ge n+2.$ Since for every  $s$ and $G$  one can easily prove  that $\order(s)\le \order(\NF( s\  |  G)),$ we get  $$\order(\NF(S(f,h) | \{f,g,h\}))\ge \order(S(f,h))\ge max\{\order(f),\order(h)\}\ge n+2.$$ In the same way we  can also prove that $\order(\NF(S(g,h) | \{f,g,h\}))\ge  n+2.$ Now recall that, accordingly to the  Buchberger algorithm, in order to determine  a standard basis of  $\lt{I}$, one has to compute  $\NF(S(f,h) | \{f,g,h\}),$   $\NF(S(g,h) | \{f,g,h\}),$  to add those of them which are not zero to the list and go on  in this way  up to the end. At each step of this procedure  the order of the elements can only increase; hence   if $m$ has degree $\ge n+2$ then $\NF(S(f,h) | \{f,g,h\})$ and $\NF(S(g,h) | \{f,g,h\})$ have degree at least $n+2$ and we cannot obtain, as \lemref{preparation} requires, the monomial $xz^n$ which has degree $n+1.$ This proves the claim.
 By \lemref{preparation} the claim implies that $m=\lt{h}={\rm{Lt}}_{\tau}(h^*)=xz^n.$

We want now to compute  $\NF(S(f,g)\ | \ \{f,g\})$. First it is clear that we can write $G(y,z)=yH(y,z)+\alpha z^c$  with $\alpha =0$ or invertible in $K[[z]]$ and  $c\ge 0.$ Hence $I=(f,g)$ where $$f=x^2+axz^p+F(y,z),\ \ \ g=xy+y H(y,z)+\alpha z^c$$  with $c\ge 0,$ $p\ge 1$ and $a$  and $\alpha$ either zero or invertible in $K[[z]].$ We have
$$
S(f,g)= y f - x g= a x y z^p + y F(y,z)- xy H(y,z)-\alpha xz^c=g(az^p-H(y,z))-\alpha xz^c+M(y,z)$$
where $M(y,z)=yF(y,z)-(yH(y,z)+\alpha z^c)(az^p-H(y,z))\in K[[y,z]].$

\vskip 1mm We claim that $\NF(S(f,g)\ | \ \{f,g\})=M(y,z)-\alpha xz^c.$ Namely we have $$S(f,g)=0 \cdot f+(az^p-H(y,z))g+M(y,z)-\alpha xz^c$$  and we need to prove:

\vskip 1mm a)  no monomial in the support of $M(y,z)-\alpha xz^c$ is divisible by $x^2$ or $xy$

\vskip 1mm b) $\lt{g(az^p-H(y,z))}\le \lt{S(f,g)}.$

\vskip 2 mm Now a) is true because $\alpha$ is zero or invertible in $K[[z]].$ As for b) it is clear that we have  $\lt{g(az^p-H(y,z))}=xy\cdot \lt{az^p-H(y,z)}.$ This monomial is not in the support of $M(y,z)-\alpha xz^c,$ hence it is in the support of
$S(f,g).$  This implies b) and the claim
\begin{equation}\label{NFh}
h=\NF(S(f,g)\ | \ \{f,g\})= M(y,z) - \alpha x z^c
\end{equation} is proved.

\noindent
Since $\lt{h}= x z ^n$ it follows $\alpha \in \mathbb U(\serz)$, $c=n$, and $\order(M)\ge n+1$.
In particular we deduce
\begin{equation} \label{g}
g = xy + \alpha z^n + y H(y,z).
\end{equation}

Let  $J:=I+(y) = (x^2 + a xz^p +F(0, z), z^n, y);  $ it is clear  that $\lt J  \supseteq (x^2, z^n, y). $ Since $R/J$ is  Artinian, $\overline{y}$ is a parameter in $A= R/I;$ hence  $$ e= e(R/I) \le length(R/J) = length (R/ \lt J \le length (R/(x^2, z^n, y))= 2n.$$ The conclusion follows.
\end{proof}

\smallskip
In example \ref{ex}, we have seen that however we fix an integer $e \ge 4, $ there is a local ring of type $(2,2) $ with multiplicity $e$ and strictly increasing Hilbert  function.   For each pair of integers $(n,e) $  such that $n\ge 3$ and $n+3 \le e\le 2n,$ we exhibit  now  local rings of type $ (2,2)$ and multiplicity $e$ whose Hilbert function has  a flat in position $n.$

\smallskip
\begin{example}
\label{examples2n}
Given the  integers $n$ and $e$ such that $n\ge 3$,  $n+3 \le e \le 2n,$ the ideal $$I=(x^2-y^{e-2}, xy-z^n)$$
is a complete intersection ideal of $R=\ser3$ of type $(2,2) $ with multiplicity $e,$  whose Hilbert function has a flat in position $n.$
\end{example}
\begin{proof}
Let us consider  a monomial ordering $\tau$ such that $x>y>z; $   we are going  to prove that $$\{f=x^2-y^{e-2},  \ \   g=xy-z^n, \ \  h=-y^{e-1}+ x z^n, \ \   k=y^e-z^{2n}\}$$ is a standard basis for $I$. Namely, if this is the case, we get
$\lt I=(x^2, xy, xz^n, y^e)$ and from this an easy computation shows that  the local ring $K[x,y,z]]/(x^2-y^{e-2}, xy-z^n)$has multiplicity $e$ and Hilbert function with a flat in position $n.$

We have: $$S(f,g)=yf-xg=y(x^2-y^{e-2})-x(xy-z^n)=xz^n-y^{e-1}$$ and since $e\ge n+3$ implies $e-1\ge n+2>n+1$, we get  $\lt{S(f,g)}=xz^n.$

We let $$h:=S(f,g)=xz^n-y^{e-1}.$$

Now $$S(f,h)=z^nf-xh=z^n(x^2-y^{e-2})-x(xz^n-y^{e-1})=xy^{e-1}-z^ny^{e-2}=y^{e-2}g$$ so that $\lt{S(f,h)}=y^{e-2}\lt{g}=xy^{e-1}.$

Further $$S(g,h)=z^ng-yh=z^n(xy-z^n)-y(xz^n-y^{e-1})=y^e-z^{2n}$$ and since $e\le 2n$ and $y>z,$  we have $\lt{S(g,h)}=y^e.$

We let $$k:=S(g,h)=y^e-z^{2n}$$ with $\lt{S(g,h)}=\lt{k}=y^e.$

Now  $$S(f,k)=y^ef-x^2k=y^e(x^2-y^{e-2})-x^2(y^2-z^{2n})=x^2z^{2n}-y^{2e-2}=z^{2n}f-y^{e-2}k$$ and since
$2e-2\ge 2(n+3)-2=2n+4>2n+2,$  we have $\lt{S(f,k)}=x^2z^{2n}.$

Also $$S(g,k)=y^{e-1}g-xk=y^{e-1}(xy-z^n)-x(y^2-z^{2n})=xz^{2n}-y^{e-1}z^n=z^nh$$ so that $\lt{S(g,k)}=z^n\lt{h}=xz^{2n}.$

Finally $$S(h,k)=y^2h-xz^nk=y^e(xz^n-y^{e-1})-xz^n(y^e-z^{2n})=xz^{3n}-y^{2e-1}=z^{2n}h-y^{e-1}k.$$ Here we can only remark that $\lt{S(h,k)}=\max\{xz^{3n},y^{2e-1}\}.$

From these computations we get
$$\NF(S(f,g)\ | \ \{h\})=\NF(h\ | \ \{h\})=0$$
$$\NF(S(f,h)\ | \ \{g\})=\NF(y^{e-2}g\ | \ \{g\})=0$$
$$\NF(S(g,h)\ | \ \{k\})=\NF(k\ | \ \{k\})=0$$
$$\NF(S(f,k)\ | \ \{f,k\})=\NF(z^{2n}f-y^{e-2}k\ | \ \{f,k\})=0$$
because $\lt{z^{2n}f-y^{e-2}k}=x^2z^{2n}\ge \lt{z^{2n}f}=x^2z^{2n},   \lt{y^{e-2}k}=y^{2e-2}.$
$$\NF(S(g,k)\ | \ \{h\})=\NF(z^nh\ | \ \{h\})=0$$
$$\NF(S(h,k)\ | \ \{h,k\})=\NF(z^{2n}h-y^{e-1}k\ | \ \{h,k\})=0$$ because
$\lt{z^{2n}h-y^{e-1}k}=\max\{xz^{3n},y^{2e-1}\}\ge \lt{z^{2n}h}=xz^{3n},   \lt{y^{e-1}k}=y^{2e-1}.$

By Buchberger's criterion the conclusion follows.\end{proof}

\smallskip
We prove now that  if $I^*_2 $ is square-free then the Hilbert function is strictly increasing,  so that the associated graded ring is  Cohen-Macaulay.

\smallskip
\begin{proposition}
\label{xyxz}
Let $A =R/I  $ be a   local ring  of type $(2,2).$  If $I^*_{\ 2} $ is square-free, then the Hilbert function of $A$ is strictly increasing  and thus
the associated graded ring $gr_{\m}(A)$ is Cohen-Macaulay.
 \end{proposition}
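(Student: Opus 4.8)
The plan is to use the preparation result in part b) of \thmref{preslemma}, which handles exactly the square-free case. By that theorem, up to isomorphism we may assume
$$
I=(f,g), \qquad f = x^2 + xz + F(y,z), \qquad g = xy + d\,yz + \alpha y^r + \beta z^s,
$$
where $F\in\seryz_{\ge 3}$, $d\in\seryz$ with $d(0,0)=1$, $r,s\ge 3$, and $\alpha,\beta$ are each either zero or a unit in the appropriate one-variable power series ring. Choosing a monomial order $\tau$ with $x>_\tau y>_\tau z$, we have $\lt{f}=x^2$ and $\lt{g}=xy$, so $f,g$ is \emph{not} a standard basis and we must run Buchberger's algorithm to find $\lt{I}$. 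The strategy is to show that the extra generator forced by the $S$-pair $S(f,g)$ has its leading term \emph{not} of the shape $xz^n$; by \lemref{preparation} the absence of a monomial $xz^n$ among the new generators rules out a flat, forcing $HF_A$ to be strictly increasing, and then \propref{increasing1} gives Cohen-Macaulayness of $gr_{\m}(A)$.

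First I would compute $\NF(S(f,g)\mid\{f,g\})$ explicitly, exactly as in the computation inside the proof of \thmref{theorem1} (equation (\ref{NFh})). Writing $g=xy+yH(y,z)+\alpha z^c$ in the notation used there, the key structural point in the square-free case is that $g$ has \emph{no} pure power $\alpha z^c$ term: since the $yz$-coefficient $d$ is a unit and the only non-$y$-divisible term is $\beta z^s$, the reduction produces a normal form whose $x$-linear part comes from the $xz$ in $f$ rather than from an $\alpha z^n$ in $g$. Concretely, the presence of the $xz$ summand in $f$ (coefficient $1$, a unit) means that when we clear the $S$-polynomial, any leading monomial of the form $xz^k$ that could appear gets cancelled or pushed into terms divisible by $x^2$, so the leading term of the new basis element $h$ lies in $\seryz$, i.e. is a pure monomial in $y,z$. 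I would verify this by tracking the support of $S(f,g)=yf-xg$ and reducing modulo $\lt{f}=x^2$, $\lt{g}=xy$.

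The heart of the argument, and the step I expect to be the main obstacle, is to argue that no monomial $xz^n$ can ever enter $\lt{I}$ throughout the entire Buchberger procedure. The cleanest route is to observe that $xz^n\in\lt{I}$ would mean $\lt{I}$ contains a minimal generator of the form $xz^n$ not in $(x^2,xy)$, which by \lemref{preparation} is exactly the fingerprint of a flat at position $n$. So I would instead argue the contrapositive: if $HF_A$ had a flat in position $n$, then \lemref{preparation} forces $xz^n$ to be the unique degree-$(n+1)$ monomial in $\lt{I}\setminus(x^2,xy)$, hence $xz^n=\lt{h}$ for the first new standard basis element $h$, hence (as in the proof of \thmref{theorem1}) $h$ must contain an $x$-linear monomial $\alpha x z^n$ with $\alpha$ a unit — but this forces $g$ to have the shape (\ref{g}), namely $g = xy+\alpha z^n + yH(y,z)$ with a pure power $\alpha z^n$. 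In the square-free normal form of part b), however, $g=xy+d\,yz+\alpha y^r+\beta z^s$ has no such pure $z$-power term available to survive the reduction: any $z$-power can only arise from $\beta z^s$, and one checks the $xz$-term of $f$ absorbs it. This contradiction shows no flat exists.

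Thus $HF_A$ is strictly increasing, and \propref{increasing1} (specialized to $b=2$) immediately yields that $gr_{\m}(A)$ is Cohen-Macaulay, completing the proof. The delicate bookkeeping is entirely in confirming that the square-free generators of part b) prevent the pure-power term $\alpha z^n$ from appearing in the reduced normal form $h$; everything else is a direct appeal to \lemref{preparation}, \thmref{preslemma}, and \propref{increasing1}.
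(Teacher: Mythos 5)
Your overall framework (reduce to the normal form of \thmref{preslemma} b), run Buchberger, use \lemref{preparation} to connect a flat with the monomial $xz^n$, finish with \propref{increasing1}) matches the paper, but your central claim is false, and it is exactly the step you flagged as the ``heart of the argument.'' You assert that in the square-free normal form the leading term of $h=\NF(S(f,g)\mid\{f,g\})$ is a pure monomial in $y,z$, because the $xz$ summand of $f$ ``absorbs'' any $xz^k$ term. It does not: reduction modulo $\{f,g\}$ only removes monomials divisible by the leading terms $x^2$ and $xy$. Computing as in the paper, with $g=xy+d\,yz+\alpha y^r+\beta z^s$ and $A=d\,yz+\alpha y^r+\beta z^s$, one gets
$$
h=\NF(S(f,g)\mid\{f,g\})=L-\beta xz^s,\qquad L=z(d-1)A+\alpha y^{r-1}A+yF\in\seryz ,
$$
and the monomial $xz^s$ is divisible by neither $x^2$ nor $xy$, so it survives. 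Your related claim that the square-free form has ``no pure $z$-power term available'' in $g$ is also wrong: $\beta z^s$ is precisely such a term, and when $\beta\in\mathbb U(\serz)$ and $s=n$ the shape \eqref{g} is realized with $H=dz+\alpha y^{r-1}$, so $\lt{h}=xz^n$ can perfectly well occur in the square-free case (e.g.\ $\beta$ a unit, $s=3$, $\order(L)\ge 4$ gives $\lt{h}=xz^3$). Hence there is no contradiction at the first $S$-pair, and your contrapositive argument collapses.

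The paper's proof instead \emph{accepts} $xz^n\in\lt{I}$, deduces from $\lt{h}=xz^n$ that $\beta\in\mathbb U(\serz)$, $s=n$ and $\order(L)\ge n+1$, and derives the contradiction one step later in Buchberger's algorithm: the next $S$-pair
$$
S(h,g)=\beta z^n g+yh=\beta d\,yz^{n+1}+\alpha\beta y^r z^n+\beta^2 z^{2n}+yL
$$
has leading term $yz^{n+1}$. This is where the square-free hypothesis actually enters, through $d(0,0)=1$: setting $y=0$ in $L$ gives $L(0,z)=(d(0,z)-1)\beta z^{n+1}$, of order at least $n+2$, so the monomial $yz^{n+1}$ is not cancelled by $yL$. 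Then $J=(x^2,xy,xz^n,yz^{n+1})\subseteq\lt{I}$ forces $HF_A(n+2)\le HF_{P/J}(n+2)=n+2$, contradicting the value $n+3$ required by a flat at position $n$ (via \lemref{preparation} and the shape \eqref{flat}). So to repair your proof you must replace the assertion ``$xz^n$ never enters $\lt{I}$'' by this second-stage computation; as written, the decisive step would fail on concrete square-free ideals with $\beta$ a unit.
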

\begin{proof}
 From Proposition \ref{preslemma} $(ii) $  we may  assume,  up to isomorphism of $R$, that  $$
I=(x^2 +  x z + F(y,z), x y +  b y z+ \alpha y^r + \beta z^s)
$$
where
$F\in \seryz_{\ge 3}, $
$b\in \seryz$ with $b(0,0)=1$,
$r, s\ge 3$,
$\alpha=0$ or $\alpha \in \mathbb U(\sery)$, and
$\beta=0$ or $\beta \in \mathbb U(\serz)$.

If $HF_{R/I}(n+2)=n+2, $ then $e=n+2 $ and we conclude by Proposition \ref{increasing1}.
Assume that $HF_{R/I}(n+2)=n+3$, then by \lemref{preparation} we have that $x z^n\in\lt{I}$.
By Burchberger's criterion we should have
$xz^n =\lt{\NF(S(f,g),\{f,g\})}$.
The $S$-polynomial of the pair $f,g$ is
$$
h:=S(f,g)= z(b-1) A + \alpha y^{r-1} A + y F -\beta x z^s,
$$
$A= b yz + \alpha y^r + \beta z^s$. We write
$L=z(b-1) A + \alpha y^{r-1} A + y F $; notice that $L\in \seryz$ and $\beta\in\serz$
so $h=\NF(h, \{f,g\})$.
Since $\lt{h}= xz^n$ we deduce
$\beta \in \mathbb U(\serz)$, $s=n$, and  $\order(L)\ge n+1$.

\noindent
Let now consider
\begin{eqnarray*}
S(h,g)=  W &=& \beta z^n g + y h \\
            &=& \beta b y z^{n+1}+ \alpha \beta y^r z^n + \beta^2 z^{2n} +y L.
\end{eqnarray*}
\noindent
Notice that since $b, \beta \neq 0$
$$
\order(\alpha \beta y^r z^n),  \order(\beta^2 z^{2n})\ge n+3>n+2=\order(\beta b y z^{n+1})
$$
and $\lt{\beta b y z^{n+1}}=y z^{n+1}$.

Recall that $\order(L)\ge n+1$, so in order to prove that  $\lt{W}=y z^{n+1}$ we should prove  that in $\supp(y L)$
there is not the monomial $y z^{n+1}$.
This is equivalent to prove that in  $\supp( L)$
there is not the monomial $z^{n+1}. $ At this end   we set $y=0$ in $L$ and we get
$$
L(0,z)= (b(0,z)-1)\beta  z^{n+1}.
$$
recall that $b(0,0)=1$ so $\order(L(0,z))\ge n+2$.
Hence we have that $\lt{k}=y z^{n+1}$.

Let us consider now the monomial ideal  $J=(x^2, xy, x z^n, y z^{n+1})\subset \lt{I}$.
We have
$$
HF_{R/I}(n+2) \le HF_{R/J}(n+2) = n+2,
$$
a contradiction.
 \end{proof}

\vskip 2mm
Notice that if $I^*_2$ is square-free then by Lemma \ref{initial} $(ii)$ we may assume, up to isomorphisms, that $f^*=x y, g^*=xz$.
Hence Proposition \ref{xyxz} recover \cite[Corollary 4.6]{GHK07} in the case $(2,2)$.

 \vskip 2mm
 The following example shows that the converse of the above theorem does not hold.
 Let $I=(x^2-y^2z,xy-y^3)\subseteq \ser3 $. It is clear that $x^2\in I^*_2$ so that  $I^*_2$ is not square-free. It is easy to see  that the Hilbert function of $A=\ser3 /I$ is strictly increasing, namely is $\{1,3,4,5,5,5,5,.......\}.$ By Proposition \ref{increasing1} the  associated graded ring of $A$ is Cohen-Macaulay.

 We close this section by describing the possible minimal free resolutions of the associated graded ring of a local ring of type $(2,2).$

 We have seen in (\ref{flat}) that the Hilbert function of a local ring $A$  of type $(2,2)$ has the following shape

\begin{equation}  HF_A(t)=
\begin{cases}
     1 & \ \ \text{$t=0$}, \\
     t+2 & \ \ \text{$t=1,\cdots, n$},\\
     t+1 & \ \ \text{$t=n+1,\cdots, e-1$},\\
     e & \ \ \text{$t\ge e$}.\\
\end{cases}
\end{equation} where  $n$ is the least integer such that $HF_A(n)=HF_A(n+1).$ We have $3\le n\le e-2$ and it is easy to see that the lex-segment ideal with the above Hilbert function is the following ideal $L:=(x^2,xy,xz^n,y^e).$
We can compute the  minimal free resolution of $P/L$ by using the well known formula of Eliaouh and Kervaire. We get

$$0\to P(-n-3)\to P(-3)\oplus P(-n-2)^2\oplus P(-e-1)\to $$ $$\to  P(-2)^2\oplus P(-n-1)\oplus P(-e)\to P\to P/L \to 0.$$

 It is clear that in the case $e\ge n+3$ there is no possible cancelation. Hence every homogeneous ideal $J$ with Hilbert function as in (\ref{flat}) and with $e\ge n+3$ has the same resolution of the corresponding lex-segment ideal.

In the other case, when $e=n+2,$ we can either have the above resolution  or one of the following obtained by cancelation:
$$0\to P(-n-3)\to P(-3)\oplus P(-n-2)\oplus P(-n-3)\to P(-2)^2\oplus P(-n-1)\to P\to P/J\to 0$$

$$0\to P(-3)\oplus P(-n-2)\to P(-2)^2\oplus P(-n-1)\to P\to P/J \to 0.$$ It is clear that if $P/J$ is Cohen-Macaulay only the last shorter resolution is available.

We apply this to the associated graded ring of a local ring of type (2,2) and  we get the following result.

\smallskip
\begin{proposition}
\label{resolution} Let $A$ be a local ring of type (2,2),    $e$ the multiplicity of $A$ and let $n$ be an integer such that $n\le e-2.$  If  $e=n+2, $ then  $gr_{\m}(A) $ is Cohen-Macaulay with minimal free resolution:
$$0\to P(-3)\oplus P(-n-2)\to P(-2)^2\oplus P(-n-1)\to P \to gr_{\m}(A)  \to 0.$$ If  $e\ge n+3, $ then $e\le 2n$ and $gr_{\m}(A) $ is not Cohen-Macaulay with minimal free resolution
$$0\to P(-n-3)\to P(-3)\oplus P(-n-2)^2\oplus P(-e-1)\to $$ $$\to  P(-2)^2\oplus P(-n-1)\oplus P(-e)\to P\to gr_{\m}(A)  \to 0.$$
\end{proposition}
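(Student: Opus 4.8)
The plan is to read the resolution of $gr_{\m}(A)=P/I^*$ off the resolution of the lex-segment ideal $L=(x^2,xy,xz^n,y^e)$ computed just above, using the general principle that the graded Betti numbers of any ideal with a prescribed Hilbert function are obtained from those of the corresponding lex ideal by a sequence of consecutive cancellations (each cancellation lowering a pair $\beta_{i,j},\beta_{i+1,j}$ by one). Since $I^*$ realizes the Hilbert function \eqref{flat}, its Betti table sits below the Eliahou-Kervaire table of $L$ and differs from it only by such cancellations. Thus the whole argument reduces to deciding, in each of the two cases, which cancellations are possible and which are forced.

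First I would treat the case $e\ge n+3$. Here I claim no consecutive cancellation can occur, because the twists appearing in two adjacent free modules of the lex resolution are pairwise distinct: comparing the twists $2,2,n+1,e$ of $F_1$ with the twists $3,n+2,n+2,e+1$ of $F_2$, and then $F_2$ with $F_3=P(-n-3)$, one checks that no value is repeated once $n\ge 3$ (which holds throughout this section) and $e\ge n+3$ — indeed the only coincidences to worry about, $n+1=3$ and $e=n+2$, are excluded by exactly these two inequalities. Hence the Betti numbers of $I^*$ equal those of $L$, and $gr_{\m}(A)$ has the stated length-three resolution. Since $\operatorname{pd}_P(gr_{\m}(A))=3$, the Auslander-Buchsbaum formula gives $\depth gr_{\m}(A)=3-3=0<1=\dim gr_{\m}(A)$, so $gr_{\m}(A)$ is not Cohen-Macaulay; finally $e\le 2n$ is precisely \thmref{theorem1}, applicable because $e\ge n+3>n+2=HF_A(n)$ means the flat at position $n$ lies strictly below the multiplicity.

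For the case $e=n+2$ I would first record that \eqref{flat} is then strictly increasing, so $gr_{\m}(A)$ is Cohen-Macaulay by \propref{increasing1}. Being a one-dimensional Cohen-Macaulay quotient of $P$, it has projective dimension $3-1=2$ by Auslander-Buchsbaum, so its minimal resolution has length two, i.e. $F_3=0$. With $e=n+2$ the lex resolution acquires the identifications $P(-e)=P(-n-2)$ and $P(-e-1)=P(-n-3)$, and among the three Betti tables reachable from it by consecutive cancellations (those enumerated in the discussion preceding the statement) only the shortest has $F_3=0$. A final check that its remaining twists $\{2,2,n+1\}$ and $\{3,n+2\}$ share no value, again using $n\ge 3$, confirms that it is already minimal, and this gives exactly the claimed short resolution.

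The step I expect to be most delicate is the bookkeeping of twists when $e=n+2$: once $e$ collapses against $n$ several summands land in the same degree, and one must be sure that the Cohen-Macaulay constraint $F_3=0$ singles out the short resolution uniquely rather than merely forbidding the longest one. Invoking the projective-dimension bound, instead of computing the cancellation maps explicitly, is what keeps this clean; the case $e\ge n+3$ is then essentially automatic once the ``no coincident twists'' verification is carried out.
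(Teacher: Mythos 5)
Your proposal follows the paper's own route exactly: the paper's formal proof is a one-liner resting on the discussion preceding the statement, where the Eliahou--Kervaire resolution of the lex ideal $L=(x^2,xy,xz^n,y^e)$ is computed and the Betti numbers of $I^*$ are located among the consecutive-cancellation descendants of that table (Peeva's theorem, which the paper uses implicitly). Your treatment of the case $e\ge n+3$ is complete and in fact more explicit than the paper's: your twist-by-twist verification is exactly what the paper's ``it is clear that there is no possible cancelation'' abbreviates, and your Auslander--Buchsbaum step (projective dimension $3$, hence $\depth=0<1=\dim$) replaces the paper's appeal to Proposition \ref{increasing1}; your justification for invoking Theorem \ref{theorem1} is also correct.

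In the case $e=n+2$, however, the step you yourself flagged as delicate contains a genuine gap, inherited from the paper's enumeration. The tables reachable from the lex table by consecutive cancellations number four, not three: besides the lex table, the middle table, and the short table, one may cancel only the pair $\beta_{3,n+3},\beta_{2,n+3}$, obtaining
$$0\to P(-3)\oplus P(-n-2)^2\to P(-2)^2\oplus P(-n-1)\oplus P(-n-2)\to P\to P/J\to 0,$$
which also has $F_3=0$ and projective dimension two. So the Cohen--Macaulay constraint alone does not single out the short resolution: your claim that ``only the shortest has $F_3=0$'' is true of the paper's three displayed tables but false for the full set of reachable ones. This fourth table must be excluded by a separate argument, for instance: if $I^*$ had a minimal generator $g_4$ of degree $n+2$, then in that table every minimal first syzygy has degree $3$ or $n+2$, so by minimality its $e_4$-component, of degree $1-n<0$ or $0$, vanishes; but the Koszul syzygy $g_1e_4-g_4e_1$ with a degree-two generator $g_1$ has $e_4$-component $g_1\neq 0$, a contradiction. (Equivalently, in the $4\times 3$ Hilbert--Burch matrix the row corresponding to $P(-n-2)$ would be identically zero, so three of the four maximal minors vanish, contradicting grade two.) With this supplement both your argument and the paper's discussion become complete; the proposition itself is correct as stated.
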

\begin{proof} It is enough to remark that by Proposition \ref{increasing1} the associated graded ring of a local ring of type (2,2) is Cohen-Macaulay when $e=n+2.$
\end{proof}

\smallskip
\section{A structure's theorem  for quadratic complete intersections of codimension two}

The aim of this section is to give a structure, up analytic isomorphism, of the minimal system of generators of ideals $I$ of type $(2,2) $  such that $A=\ser3/I$ is of multiplicity $e$.
This a first step towards the difficult problem of the analytic classification of the ideals of type $(2,2)$.
In this direction we show in \exref{noniso} two ideals of type $(2,2)$ with same Hilbert function that are not analytic isomorphic.
 Accordingly with \propref{base1}, \exref{ex}, and \exref{examples2n} the Hilbert function of $A$ take exactly the following shapes
 \begin{equation}   H(e)(t) :=
\begin{cases}
     1 & \ \ \text{$t=0$}, \\
     t+2 & \ \ \text{$t=1,\cdots, e-3$},\\
     e  & \ \ \text{$t\ge e-2$}
\end{cases}
\end{equation}
or
\begin{equation}  H(n,e) (t):=
\begin{cases}
     1 & \ \ \text{$t=0$}, \\
     t+2 & \ \ \text{$t=1,\cdots, n$},\\
     t+1 & \ \ \text{$t=n+1,\cdots, e-2$},\\
     e & \ \ \text{$t\ge e-1$}.\\
\end{cases}
\end{equation}

\smallskip
\begin{theorem}
\label{conjecture22}
Let $A $ be a local ring  of type $(2,2)$ and multiplicity $e.$
The following conditions are equivalent:

\noindent
$(i)$  $HF_{A} = H(n,e) $ for some integer $n \ge 3.$

\noindent
$(ii)$ Up analytic isomorphism, $I$ is generated in $R=\ser3 $ by:
\begin{eqnarray*}
  f &=& x^2 + a  z^p (x+H)  -H^2 +L  \\
 g &=& xy + \alpha z^n + y  H
\end{eqnarray*}
where \begin{itemize}
\item  $a\in \{0,1\}$, $p\ge 2$,   $\alpha\in \mathbb U (\serz)$,

\item $H, L  \in \seryz$ with $\order(L)\ge n+1$ , $\order(H)\ge 2$,

\item $n+3 \le e \le 2n,  $

\item $ \order (2\alpha z^nH -a \alpha z^{n+p} +y L)  \ge   e-1  $ and the equality holds whenever   $e< 2n.  $
\end{itemize}
\end{theorem}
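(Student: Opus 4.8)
The plan is to prove the two implications separately: for $(i)\Rightarrow(ii)$ I assemble the normal form from the preparation results, and for $(ii)\Rightarrow(i)$ I run a Buchberger computation in the spirit of \exref{examples2n}.

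\emph{Proof of $(i)\Rightarrow(ii)$.} Condition $(i)$ exhibits a genuine flat, $HF_A(n)=HF_A(n+1)<e$, which forces $e\ge n+3$; together with \thmref{theorem1} this gives $n+3\le e\le 2n$. Since $HF_A$ is not strictly increasing, the contrapositive of \propref{xyxz} shows that $I^*_2$ is not square-free, so \thmref{preslemma}(a) lets me take $I=(x^2+axz^p+F,\ xy+G)$ with $a\in\{0,1\}$, $p\ge2$ and $F,G\in\seryz_{\ge3}$. Writing $G=yH+\alpha z^c$ with $H\in\seryz$, $\order(H)\ge2$, and $\alpha$ either zero or a unit, and setting $L:=F-az^pH+H^2$, the generator $f$ takes the stated shape $x^2+az^p(x+H)-H^2+L$; the facts $c=n$ and $\alpha\in\mathbb U(\serz)$ are exactly what is established inside the proof of \thmref{theorem1}, where one computes
\[
h:=\NF(S(f,g)\mid\{f,g\})=-\alpha xz^n+yL+\alpha z^nH-a\alpha z^{n+p},
\]
with $\lt{h}=xz^n$ by \lemref{preparation}. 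This already yields $g=xy+\alpha z^n+yH$.

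\emph{The order conditions.} Next I reduce the following critical pair. A direct computation shows that, up to a unit,
\[
\NF(S(g,h)\mid\{f,g,h\})=\alpha^2 z^{2n}+y\bigl(2\alpha z^nH-a\alpha z^{n+p}+yL\bigr),
\]
the coefficient $2$ appearing because the monomial $yz^nH$ is produced once from $z^ng$ and once from $yh$. Write $W:=2\alpha z^nH-a\alpha z^{n+p}+yL$. Since $A$ has multiplicity $e$, the stable value $e$ of $HF_A$ can only be reached through this element, whose leading monomial must therefore carry the power $y^e$. Comparing the orders of $yW$ and of $\alpha^2z^{2n}$ gives $\order(W)\ge e-1$, with equality when $e<2n$ (when $e=2n$ the monomial ideals $(x^2,xy,xz^n,y^{2n})$ and $(x^2,xy,xz^n,z^{2n})$ share the same Hilbert function, so equality is not needed there). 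Finally, since $\order(2\alpha z^nH)\ge n+2$ and $\order(a\alpha z^{n+p})\ge n+2$ while $e-1\ge n+2$, the inequality $\order(W)\ge e-1$ forces $\order(yL)\ge n+2$, i.e. $\order(L)\ge n+1$. This produces all the conditions of $(ii)$.

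\emph{Proof of $(ii)\Rightarrow(i)$.} Starting from the displayed $f,g$ one has $\lt{f}=x^2$ and $\lt{g}=xy$, and the two reductions above are valid verbatim; the hypotheses $\order(L)\ge n+1$ and $\order(W)\ge e-1$ give $\lt{h}=xz^n$ and, writing $k$ for the element $\alpha^2z^{2n}+yW$, $\lt{k}=y^e$ (or $z^{2n}$ when $e=2n$, which yields the same Hilbert function). I then invoke Buchberger's criterion in the power series ring, exactly in the style of \exref{examples2n}: it remains to verify that the normal forms of the remaining critical pairs $S(f,h),S(f,k),S(g,k),S(h,k)$ all vanish. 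Granting this, $\lt I=(x^2,xy,xz^n,y^e)$, and a count of standard monomials shows $HF_{R/\lt I}=H(n,e)$; since $HF_A=HF_{R/\lt I}$, the proof is complete.

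\emph{Main obstacle.} The heart of the argument is the Buchberger verification in $(ii)\Rightarrow(i)$ with \emph{general} power series $F,H,L$ rather than the monomials of \exref{examples2n}: one must drive each remaining $S$-polynomial to zero while tracking leading terms through the order bounds, and rule out any critical pair creating a new leading monomial of degree $\le e$ outside $(x^2,xy,xz^n,y^e)$. The parallel subtlety in the forward direction is the boundary case $e=2n$, where the relaxation from equality to $\order(W)\ge e-1$ must be justified by the coincidence of Hilbert functions noted above.
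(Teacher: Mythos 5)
Your route is the same as the paper's: reduce to \thmref{preslemma}(a) via \propref{xyxz} and \thmref{theorem1}, extract $g=xy+\alpha z^n+yH$ and $L=F-az^pH+H^2$ from $h=\NF(S(f,g)\,|\,\{f,g\})$, then read the order conditions off the reduction $S(h,g)=\alpha^2z^{2n}+yW$ with $W=2\alpha z^nH-a\alpha z^{n+p}+yL$ (your explanation of the coefficient $2$ is correct and matches the paper). But two points in your forward direction are off. First, you never treat the pair $(f,h)$: the paper explicitly checks $\NF(S(h,f),\{h,f,g\})=0$, and this is not cosmetic. Your assertion that the stable value $e$ ``can only be reached through'' $S(h,g)$ is unsupported without it: a priori the degree-$e$ generator of $\lt{I}$ could be produced by the pair $(f,h)$, in which case $\order(S(h,g))$ could exceed $e$ and the equality $\order(W)=e-1$ for $e<2n$ --- precisely the delicate clause of $(ii)$ --- would not follow. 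Second, your claim that the leading monomial ``must carry the power $y^e$'' is false: what is forced is only that $m:=\lt{\alpha^2z^{2n}+yW}$ is \emph{some} monomial of degree $e$ in $K[y,z]$; when $e<2n$ one has $m=y\cdot\lt{W}$, which can perfectly well be $yz^{e-1}$. The slip is harmless for the Hilbert-function count, since adjoining \emph{any} degree-$e$ monomial of $K[y,z]$ to $(x^2,xy,xz^n)$ produces the same tail $e$ (not only $y^e$ and $z^{2n}$, as your parenthetical suggests), but your converse conclusion ``$\lt{I}=(x^2,xy,xz^n,y^e)$'' must be restated with a general such $m$, as the paper does.

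The genuine gap is the one you flag yourself: in $(ii)\Rightarrow(i)$ you write ``granting this'' for the vanishing of the normal forms of $S(f,h),S(f,k),S(g,k),S(h,k)$, and you declare this verification, with general power series $F,H,L$, to be the heart of the matter. As written, the implication is therefore unproved. In fact that verification can be bypassed by the order-increase argument the paper already used in proving \thmref{theorem1}, which is what its terse ``by following the previous computations'' leans on. The $S$-polynomials of the pairs involving $k$ have order at least $e+1$, and every element added later in the completion has order at least as large; hence $\lt{I}$ agrees with $(x^2,xy)$ in degrees $\le n$, with $(x^2,xy,xz^n)$ in degrees $n+1,\dots,e-1$, and contains the degree-$e$ monomial $m$. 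This already gives $HF_A(t)=H(n,e)(t)$ for $t\le e-1$, in particular $HF_A(e-1)=e$; on the other hand $(x^2,xy,xz^n,m)\subseteq\lt{I}$ gives $HF_A(t)\le e$ for all $t$, so $e(A)=e$ and, by the classical facts recalled in the introduction, $HF_A(t)=e$ for all $t\ge e-1$. Thus $HF_A=H(n,e)$ without reducing a single further pair (and, a posteriori, those pairs must indeed reduce to zero, since no new leading monomial can appear). Inserting this short sandwich in place of your ``granting this'' closes your proof and removes what you called the main obstacle.
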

\begin{proof}
Taking advantage of  Proposition \ref{preslemma}, the proof is based on the computation of $\lt I$ accordingly with Buchberger's criterion.   As usual assume $x> y, x> z.$

\noindent First we prove $(i)$ implies $(ii)$.
Since $HF_{A} = H(n,e),  $ then  by Proposition \ref{increasing1} $gr_{\m}(A) $ is not Cohen-Macaulay and, by Theorem \ref{theorem1}, $n+3 \le e \le 2n. $ By Proposition \ref{xyxz}, $I^*_2 $  contains a square of a linear form. Hence  we may assume that
$f^*=x^2$ and $g^*=xy$. Notice that $x^2, xy \in \lt I, $ hence because $(i), $  by Lemma \ref{preparation},   $\lt I \supseteq  (x^2, xy, xz^n). $ From the particular shape of the Hilbert function it is easy to see that $\lt I= (x^2, xy, xz^n, m)$ where $m $ is a monomial in $ K[y,z]_e.$
From \lemref{preslemma} we may  also assume
\begin{eqnarray*}
  f &=& x^2 + a x z^p + F(y,z),\\
g &=&  xy + G(y,z),
\end{eqnarray*}
where $a\in \{0,1\}$, $p\ge 2$,  $F, G \in \seryz$ with $\order(F), \order(G)\ge 3$.
Moreover, from the equation \eqref{g} of the  proof of Theorem \ref{theorem1},   (\ref{g}),  we get
$$G(y,z)= y H(y,z)+\alpha z^n $$ where    $H\in \seryz_{\ge 2}$ and
  $\alpha \in \mathbb U(\serz).$
We recall that   $S(f,g)= yf-xg = a xy z^p +y F - \alpha xz^n -xy H. $ In particular a standard computation gives
  $$ h:= \NF(S(f,g),\{f,g\}) = - \alpha x z^n  + yL + \alpha z^n (H- a  z^p )$$ where
  $L= F- a z^p H +H^2.$  Notice that $\order (\alpha z^n (H- a  z^p ) ) \ge n+2.  $
  Notice that $xz^n =\lt h$.

  A simple calculation shows that  $\NF(S(h,f),\{h,f,g\})=0$.
  On the other hand
  $$S(h, g) = \NF(S(h,g),\{h,f, g\}) = \alpha^2 z^{2n} +y (2\alpha z^nH -a \alpha z^{n+p} +y L) \neq 0 $$
 because $  \alpha \neq 0$ and $z^{2n} $ does not appear in the support of the remaining part.
  As a consequence $m=\lt {S(h, g)}, $ and hence $ \order (S(h,g))=e.$ It follows $ \order (2\alpha z^nH -a \alpha z^{n+p} +y L)      \ge e-1$.
  In particular  $\order (L) \ge n+1$, and $ \order (2\alpha z^nH -a \alpha z^{n+p} +y L)=e-1$ if $e < 2n$,.

 Conversely,  assuming  $(ii), $ it is enough to apply Buchberger's criterion for computing $\lt I.$   By following the previous computations we get $\lt I=(x^2, xy, xz^n, m) $ where $m=\lt{\alpha^2 z^{2n} +y (2\alpha z^nH -a \alpha z^{n+p} +y L)} $ and $(i) $ follows.
  \end{proof}

  \vskip 2mm
  If the Hilbert function is increasing, i.e. of type $H(e)$, we present a structure's theorem under the assumption that $I^*$   does not contain the square of a linear form.

 \smallskip
\begin{theorem}
\label{conjecture23}
Let $A $ be a local ring  of type $(2,2)$ and multiplicity $e.$
The following conditions are equivalent:

\noindent
$(i)$  $HF_{A} = H(e) $ and $I^* $ does not contain the square of a linear form

\noindent
$(ii)$ Up analytic isomorphism, $I$ is generated in $R=\ser3 $ by:
\begin{eqnarray*}
  f &=& x^2 +   x z +F \\
  g &=& x y +  d y z+ \alpha y^r + \beta z^s
\end{eqnarray*}
where \begin{itemize}

\item $r \ge  3$,

\item $F\in \seryz$ and $\order(F)\ge 3$,

\item  $d\in  \mathbb U(\seryz)$, with $d(0,0)=1$,

\item $\alpha=0$ or $\alpha \in \mathbb U(\sery)$,
$\beta=0$ or $\beta \in \mathbb U(\serz)$ and $s \ge e-1\ge 3,  $

\item
$\order(F +  d(d-1) z^2 + \alpha (2d-1) z  y^{r-1} + \alpha^2 y^{2(r-1)}) =  e-2.
$
\end{itemize}
\end{theorem}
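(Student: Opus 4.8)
Theorem \ref{conjecture23} is the companion of **Theorem \ref{conjecture22}** for the strictly increasing case, under the extra square-free hypothesis, so my plan is to follow exactly the same machinery: use the preparation result to put $I$ into a normal form, then run Buchberger's criterion to read off $\lt{I}$, and finally translate the combinatorial shape of $\lt{I}$ into the order conditions on the power series. Let me now sketch the steps.

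=== PROOF PROPOSAL ===

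The plan is to mirror the proof of \thmref{conjecture22}, exploiting \propref{xyxz} and \propref{preslemma}$(b)$. Throughout I fix a monomial ordering $\tau$ with $x>y>z$.

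\textbf{Direction $(i)\Rightarrow(ii)$.}
Assume $HF_A = H(e)$ and that $I^*$ contains no square of a linear form. By \lemref{initial}$(ii)$ and \propref{preslemma}$(b)$, up to analytic isomorphism we may write
$$
f = x^2 + xz + F(y,z), \qquad g = xy + d\,yz + \alpha y^r + \beta z^s,
$$
with $F\in\seryz_{\ge 3}$, $d\in\seryz$, $d(0,0)=1$, $r,s\ge 3$, and $\alpha,\beta$ either zero or units in $\sery,\serz$ respectively; here $\lt{f}=x^2$ and $\lt{g}=xy$. Since $I^*_2=\langle x^2,xy\rangle$ does not contain $xz^n$ unless forced, and $HF_A=H(e)$ means the flat occurs exactly at $n=e-2$, the lex-segment analysis (as in \propref{resolution}) shows $\lt{I}=(x^2,xy,xz^{e-2},m)$ for a monomial $m\in K[y,z]_e$; so by \lemref{preparation} I get $xz^{e-2}\in\lt{I}$. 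The heart of the argument is then the explicit $S$-polynomial computation, exactly as in \propref{xyxz}: I compute $h:=\NF(S(f,g)\,|\,\{f,g\})$ and show, using that its leading term must be $xz^{e-2}$, that this forces $\beta\in\mathbb U(\serz)$ and $s=e-2$... but one must be careful, because the stated conclusion is $s\ge e-1$, so the degree bookkeeping will shift by one relative to \propref{xyxz}; I expect the monomial $m\in K[y,z]_e$ to arise at the \emph{next} $S$-pair, $S(h,g)$, pushing $s$ down to $e-1$. Carrying the substitution $A=d\,yz+\alpha y^r+\beta z^s$ through the reduction, the normal form $h$ will have the shape $-\beta x z^{s}+(\text{terms in }\seryz)$, and matching $\lt{h}=xz^{e-2}$ pins down $s$ and $\beta$, while the requirement that $m$ appear in $K[y,z]_e$ (which is where $HF_A$ reaches $e$) yields the final order condition
$$
\order\!\bigl(F + d(d-1)z^2 + \alpha(2d-1)zy^{r-1} + \alpha^2 y^{2(r-1)}\bigr) = e-2.
$$

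\textbf{Direction $(ii)\Rightarrow(i)$.}
Conversely, given the normal form and the order conditions, I apply Buchberger's criterion directly: compute $S(f,g)$, reduce it to $h$, then reduce $S(h,f)$ (which should give $0$) and $S(h,g)$, whose leading monomial will be the degree-$e$ monomial $m\in K[y,z]_e$ exactly when the displayed order condition holds with equality. This produces $\lt{I}=(x^2,xy,xz^{e-2},m)$, whose Hilbert function is computed combinatorially to be $H(e)$; and since $f^*=x^2 + xz$ with $x^2,xy\in I^*$ but no perfect square in $I^*_2$ (as $f^*$ and $g^*$ have distinct linear factors by \lemref{initial}$(ii)$), the square-free condition on $I^*$ holds. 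The equivalence follows.

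\textbf{Main obstacle.}
The delicate point is the precise order/degree bookkeeping in the $S$-polynomial reductions, and in particular keeping track of \emph{which} $S$-pair produces the degree-$e$ generator $m$. The interplay between the three parameters $r$, $s$, and $\order(F)$ — together with the constraint $d(0,0)=1$ used to kill the unwanted $z$-power in $L(0,z)$ as in \propref{xyxz} — must be handled carefully so that the single combined order condition in the last bullet emerges cleanly and one verifies it is both necessary and sufficient for $m$ to land in degree $e$ rather than in a higher degree. Everything else is a routine application of Grauert's division theorem and Buchberger's criterion in the power series ring.
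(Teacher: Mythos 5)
Your overall frame (preparation via \thmref{preslemma}$(b)$, then Buchberger's criterion) is indeed the paper's, but the central step of your argument is wrong: you assert $\lt{I}=(x^2,xy,xz^{e-2},m)$ with $m\in K[y,z]_e$ and invoke \lemref{preparation} to force $xz^{e-2}\in\lt{I}$. \lemref{preparation} does not apply here: its hypotheses require $HF_{P/J}(n+1)=n+2$ \emph{and} $HF_{P/J}(n+2)=n+3$, and for the function $H(e)$ the only candidate is $n=e-2$, where $HF(n+2)=e\neq n+3=e+1$. The four-generator shape you posit is the flat-case shape of \thmref{conjecture22}. The paper's proof of \thmref{conjecture23} instead observes that, since $H(e)$ agrees with $HF_{P/(x^2,xy)}$ through degree $e-2$ and then drops by exactly one, $\lt{I}=(x^2,xy,m)$ with a \emph{single} new monomial $m\in K[y,z]$ of degree $e-1$, which by Buchberger must be $\lt{\NF(S(f,g),\{f,g\})}$. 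From the explicit normal form $-\beta xz^s+yW+\alpha\beta y^{r-1}z^s+\beta(d-1)z^{s+1}$, where $W=F+d(d-1)z^2+\alpha(2d-1)zy^{r-1}+\alpha^2y^{2(r-1)}$, the requirement that the leading monomial lie in $K[y,z]_{e-1}$ forces $\order(W)=e-2$ and, when $\beta\neq0$, $s\ge e-1$: a term $xz^s$ of degree $s+1\le e-1$ would beat every monomial of $K[y,z]_{e-1}$ in the local order, since $x>_{\tau}y,z$. Your route produces exactly the opposite constraint: matching $\lt{h}=xz^{e-2}$ ``pins down'' $s=e-2$, which contradicts the condition $s\ge e-1$ you are supposed to prove, and your remark that the ``degree bookkeeping will shift by one'' via $S(h,g)$ is never substantiated and cannot be: if $s=e-2$ you are in the mechanism of \propref{xyxz} and \thmref{conjecture22}, where the next $S$-pair yields a generator in degree $e$ and the stated normal form is never recovered for that presentation.

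The same confusion infects your converse direction: assuming the conditions in $(ii)$, the term $-\beta xz^s$ has degree $s+1\ge e$, so no monomial $xz^{e-2}$ can arise at all; the first $S$-pair already yields $\lt{I}=(x^2,xy,y\lt{W})$ with $y\,\lt{W}$ of degree $e-1$ --- three generators, consistent with the Cohen--Macaulayness of the tangent cone guaranteed by \propref{increasing1} --- not your four-generator ideal with a degree-$e$ generator. So while the $S$-polynomial computations you propose are the right kind of computation, the identification of the shape of $\lt{I}$, which is the single piece of the argument driving all the numerical conclusions ($s\ge e-1$ and $\order(W)=e-2$), is incorrect and must be replaced by the paper's three-generator analysis.
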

\begin{proof}
As usual, consider a monomial ordering $\tau $ with $x>y, x>z.$
We prove  $(i) $ implies $(ii).$ By  Theorem \ref{preslemma} $(ii)$, we may assume  that
$$
I=(x^2 +   x z + F(y,z), x y +  d y z+ \alpha y^r + \beta z^s)
$$
$F\in \seryz_{\ge 3}, $
$d\in \seryz$ with $d(0,0)=1$,
$r, s\ge 3$,
$\alpha=0$ or $\alpha \in \mathbb U(\sery)$, and
$\beta=0$ or $\beta \in \mathbb U(\serz)$.
 Since the Hilbert function is increasing up to $n=e-2\ge 2 $ and $HF_{R/I}(t) = e$ for all $t\ge e-2, $ then
  $\lt{I} =(x^2, xy, m ) $ where $m \in K[y,z] $ is a monomial of  degree
$e-1$.

By Buchberger's criterion necessarily
$m= \lt{\NF(S(f,g),\{f,g\})}$.
Now
$$
S(f,g)= -\beta xz^s  + yF(y,z) +xy [(1-d)z - \alpha y^{r-1}]
$$
After a computation we get
$$\NF(S(f,g),\{f,g\}) = -\beta xz^s  +  yW +\alpha \beta y^{r-1} z^s + \beta (d-1) z^{s+1} $$
where   $W=F +  d(d-1) z^2 + \alpha (2d-1) z  y^{r-1} + \alpha^2 y^{2(r-1)}.$
Since $y W \in \seryz$, $r\ge 3$ and $1-d\in (y,z)\seryz$ we get that if $\beta \neq 0, $ then $  xz^s $ appears in the support of $\NF(S(f,g),\{f,g\}).$ Since $\lt{\NF(S(f,g),\{f,g\})} \in K[y,z]_{e-1},$  it follows  $\order(W)=  e-2 $ and, if $\beta \neq 0, $ then $s\ge e-1. $

Conversely if we assume $(ii),$ then it is easy to see that $I^* $ does not contain the square of a linear form because $I^*_2= (x^2+xz, xy+yz) $ which is reduced. Moreover
 by repeating  Buchberger's algorithm, looking at the previous computation on  $S(f,g), $  we  get  $$\lt{I}=(x^2, xy, y \lt W),  $$ hence  $HF_{A} = H(e). $  \end{proof}

\smallskip
\section{Examples }
The aim of  this section is to  present   examples  supporting   the results of the previous sections or detecting  the possible extensions  to the non quadratic case. All  computations are performed by using CoCoA system (\cite{CoCoA}).
Here $HS_A(\theta) $ denotes the Hilbert series of $A, $ that is $HS_A(\theta)= \sum_{t\ge 0} HF_A(t)\theta^t.$
\vskip 5mm   We have seen in Proposition \ref{resolution} that the minimal free resolution of the tangent cone of  a local ring of type $(2,2)$ has no possible cancelation, both in the case the Hilbert function is strictly increasing and in the case of a flat.  One can ask if this is the case also  for local rings of type $(a,b)$ with $3\le a\le b.$

The first two examples that we propose show that the answer is negative.

\begin{example} Let $A=R/I$   where  $I=(x^3, z^5 + xz^3 + x^2y).$ The local ring $A$ has type $(3,3)$ and $I^*=(x^3, x^2y, x^2z^3, -xyz^5 + xz^6, -xz^7, z^{10}).$
The resolution of $P/I^*$ is the following

$$0\to P(-7)\oplus P(-10) \to  P(-4)\oplus P^2(-6)\oplus P(-8)\oplus P^2(-9)\oplus P(-11) \to$$ $$\to  P^2(-3)\oplus P(-5)\oplus P(-7)\oplus P(-8)\oplus P(-10)\to P \to P/I^*\to 0.$$ It is clear that we have a possible cancelation and the Hilbert function  $$\{1,3,6,8,10,11,13,14,14,15,15,.......\}$$ has a flat in position 7.
\end{example}

\smallskip
\begin{example} Let $A=R/I$ where $I=(x^4, z^4 + x^2y).$ The local ring $A$ has type $(3,4)$ and $I^*=(x^2y, x^4, x^2z^4, z^8).$
The resolution of $P/I^*$ is the following
$$0 \to P(-9) \to P(-5)\oplus P(-7)\oplus P(-8)\oplus P(-10)\to $$ $$ \to P(-3)\oplus P(-4)\oplus P(-6)\oplus P(-8)\to P \to P/I^*\to 0.$$ It is clear that we have a possible cancelation and the Hilbert function $$\{1,3,6,9,11,13,14,15,16,16,16,........\}$$ is strictly increasing.
\end{example}

\smallskip
 The following example   shows   that Proposition \ref{increasing1}  cannot be extended to local rings of type $(a,b) $ with $a>2.$

\begin{example}   Let us consider the ideal $ I=(x^4, x^2y+z^4) \subseteq R=k[[x,y,z]].  $  Then $A=R/I$ has  strictly increasing Hilbert function, in fact the Hilbert series is:
$$HS_A(\theta) = (1 + 2\theta + 3\theta^2 + 3\theta^3 + 2\theta^4 + 2\theta^5 + \theta^6 + \theta^7 + \theta^8) / (1-\theta).$$
Nevertheless $I^*= (x^2y, x^4, x^2z^4, z^8), $ hence $gr_{\m}(A) $ is not Cohen-Macaulay.
\end{example}

\smallskip
The following example, due to T. Shibuta,  shows that    the Hilbert function of a one-dimensional  local domain   of type $(2,b) $ can  have  $b-1$ flats, the maximum number accordingly to Proposition  \ref{increasing}.

\begin{example} (see \cite{GHK07}, example 5.5) Let $b \ge 2 $ be an integer. Consider the family of semigroup rings
$$A= k[[ t^{3b}, t^{3b+1}, t^{6b+3}]].$$
It is easy to see that   $A= k[[x,y,z]]/I_b $ where $I_b= (xz-y^3, z^b-x^{2b+1}). $ Thus $A$ is a one-dimensional local domain of type $(2,b). $ For every $b \ge 2 $ the Hilbert function of $A$ has    $b-1$ flats. Namely
\begin{equation} \label{b-1flats} HF_A(t)=
\begin{cases}
     1 & \ \ \text{$t=0$}, \\
    2 t+2 & \ \ \text{$t=1,\cdots, b-1$},\\
    2 b & \ \ \text{$t=b$},\\
    2b+1 & \ \ \text{$t=b+1$},\\
    2b+k  & \ \ \text{$t=b+2k, \ \ k=1,\cdots, b-1$},\\
    2b+k+1  & \ \ \text{$t=b+2k+1, \ \ k=1,\cdots, b-1$},\\
    3b  & \ \ \text{$t\ge 3b-1 $}.\\
\end{cases}
\end{equation}
\end{example}

\smallskip
In the above  example the Hilbert function of the local ring of type $(2,b) $ presents  $b-1 $ flats which are not consecutive. The following example shows  that we can also have $b-1$ consecutive flats, that is a  strip like this:  $HF(n)=HF(n+1)= \dots =HF(n+b-1) <e.$

\begin{example}   Let us consider  the ideal $ I=(x^2,   xy^2+z^5+xy^3z^2) \subseteq R=k[[x,y,z]].  $  Then $A=R/I$  is of type $(2,3)$ and its  Hilbert function   presents two (=b-1) flats   which are consecutive: namely we have $HF(5)=HF(6)=HF(7) = 8 < e=10.$ In particular the Hilbert  series is:
$$HS_A(\theta)=(1+2\theta+2\theta^2+\theta^3+\theta^4+\theta^5+\theta^8+\theta^9)/(1-\theta).$$
\end{example}

\smallskip
 The Hilbert function of a local ring of type $(a,b), $ with $3 \le a\le b, $    is at the moment  far from our understanding.  In order to show how the problem is difficult when $a$ and $b$ are increasing, we present two more examples, the first of type $(3,3) $ with one very large platform consisting of 13  consecutive flats, the second of type $(4,4) $ with nine flats and three platforms.

\begin{example}   Let  $ I=(x^3-zy^{14},  x^2y+xz^7) \subseteq R=k[[x,y,z]].  $  The local ring $A=R/I$  is of type $(3,3)$ and
$$HF_A(15)=HF_A(16)=\dots \dots = HF_A(29)=31 < e=32. $$ In particular the Hilbert  series is:
$$ HS_A(\theta)=(1 + 2\theta + 3\theta^2 + 2\theta^3 + 2\theta^4 + 2\theta^5 + 2\theta^6 + 2\theta^7 + 2\theta^8 + \theta^9 + 2x\theta^{10} + 2\theta^{11} + 2\theta^{12} + $$ $$+ 2\theta^{13} + 2\theta^{14}  + \theta^{15} + \theta^{16} + \theta^{30}+ \theta^{31}) / (1-\theta)  $$
and $I^*=(x^3, x^2y, x^2z^7, xz^{14}, xy^{15}z, y^{31}z).$
\end{example}

\begin{example}   Let  $ I=(x^4,   xy^3 - z^6) \subseteq R=k[[x,y,z]].  $  The local ring $A=R/I$  is of type $(4,4) $ and
$$HF_A(8)=HF_A(9)=HF_A(10)=HF_A(11) =18; $$  $$HF_A(13)=HF_A(14)=HF_A(15)=HF_A(16) =20; $$ $$HF_A(18)=HF_A(19)=HF_A(20)=HF_A(21) =22< e=24; $$ In particular the Hilbert  series is:
$$HS_A(\theta)= (1 + 2\theta + 3\theta^2 + 4\theta^3 + 3\theta^4 + 2\theta^5 + \theta^6 + \theta^7 + \theta^8 + \theta^{12} + \theta^{13 }+ \theta^{17}+ \theta^{18} + \theta^{22} + \theta^{23}) / (1-\theta) $$ and $I^*= (xy^3, x^4, x^3z^6, x^2z^{12}, xz^{18}, z^{24}). $
\end{example}

\smallskip
It would be very interesting to describe the  isomorphism classes of local rings of type $(2,2)$  which have the same given Hilbert function. But this is a difficult task, as  the following examples show.

First we are given  the Hilbert function  $\{1,3,4,5,5,6,6,...\}$ which has a flat in position 3  and multiplicity 6.  The two ideals which we are going to prove that are not isomorphic are obtained one from the other with very little modifications, namely by adding a monomial to one of the two generators.

\begin{example}
\label{noniso}
Let us consider the ideals $$I=( x^2-y^4,xy+z^3), \ \ J=(x^2+xz^2-y^4, xy+z^3)$$
in $R=\ser3.$

They are of type $(2,2)$,
  they have  the same Hilbert function $\{1,3,4,5,5,6,6,...\}$ and the same leading ideal
$\lt{I}=\lt{J}=(x^2, xy, xz^3, y^6).$
On the other hand the ideals of initial forms differ in degree $6$:
$$I^*=(x^2, xy, xz^3, y^6-z^6),\ \ \ J^*=(x^2, xy, xz^3, y^6+yz^5-z^6).$$ We prove that $\ser3/I$ and $\ser3/J$ are not isomorphic.

If there exists an analytic isomorphism $\phi$ such that $\phi(I)=J$ then we can find power series $f,g,h$ of order $1$ such that $\mathcal M=(f,g,h)$ and $\phi$ is the result of substituting  $f$ for $x$, $g$ for $y$ and $h$ for $z$ in any power series of $R.$ We have  $f=L_1+F\ \ \ \  g=L_1+G\ \ \ \ h=L_3+H$ where $L_1,L_2,L_3$ are linearly independent linear forms  in $K[x,y,z]$ and $F,G,H$ are power series of order $\ge 2.$

We let for $i=1,2,3$
$$L_i=\lambda_{i 1}x+\lambda_{i 2}y+\lambda_{i 3}z$$ with $\lambda_{i j}\in \res.$
Since $x^2-y^4\in I$ we have $\phi(x^2-y^4)=f^2-g^4\in J$, hence $L_1^2\in J^*.$ Since $I^*_{\ 2}$ is the $\res$-vector space $I^*_{\ 2}=<x^2,xy>,$  we have
$$(\lambda_{1 1}x+\lambda_{1 2}y+\lambda_{1 3}z)^2=px^2+qxy$$  with $p,q\in \res;$ this clearly implies $\lambda_{12}=\lambda_{13}=0.$

In the same way, since  $xy+z^3\in I,$ we have $\phi(xy+z^3)=fg+h^3\in J$, hence $L_1L_2\in J^*.$ Thus we get $$(\lambda_{1 1}x)(\lambda_{2 1}x+\lambda_{2 2}y+\lambda_{2 3}z)=rx^2+sxy$$  with $r,s\in \res.$ This implies   $\lambda_{23}=0$ because $\lambda_{1 1}\neq 0.$

Finally we have $$y^6-z^6=-y^2(x^2-y^4)+(xy+z^3)(xy-z^3)\in I$$ so that $\phi(y^6-z^6)=g^6-h^6\in J,$ and,  as before, $L_2^6-L_3^6\in J^*.$  Looking at the generators of the vector space $J^*_{\ 6}$ we get as a consequence
$$(\lambda_{2 1}x+\lambda_{2 2}y)^6-(\lambda_{3 1}x+\lambda_{3 2}y+\lambda_{3 3}z)^6=Ax^2+Bxy+Cxz^3+D(y^6+yz^5-z^6)$$ where $A, B, C, D$ are forms of degree $4, 4, 2, 0$ respectively in the polynomial ring $\res [x,y,z].$

Since $L_1, L_2, L_3$ are linearly independent, we must have $\lambda_{3 3}\neq 0.$ Hence,  looking at the coefficient of the monomial $y^5z$ in the above formula, we get $\lambda_{3 2}=0.$ But then, looking at the coefficient of the monomial $yz^5,$ we certainly  get $D=0$ and finally, looking at the coefficient of the monomial $z^6,$ we get $\lambda_{3 3}=0.$ This  is a contradiction, so that the algebras $R/I$ and $R/J$ are not in the same isomorphism class.
\end{example}

\smallskip
The case when the Hilbert function is strictly increasing is not more easy to handle. Here we consider the Hilbert function $\{1,3,4,5,6,6,6,....\}$ which is strictly increasing and we look at the possible isomorphism classes of local rings with that Hilbert function.

\begin{example}
Let us consider  the two ideals $$I:=(x^2+y^4,xy),\ \ \ \ \ J:=(x^2+y^4+z^4,xy).$$
They have the same Hilbert function  $\{1,3,4,5,6,6,6,....\}$ and different tangent cones, namely $$I^*=(x^2,xy,y^5)\ \ \ \ \ \  J^*=(x^2,xy,y^5+yz^4).$$ A calculation as before shows that $\ser3/I$ and $\ser3/J$  are not isomorphic.
\end{example}

\providecommand{\bysame}{\leavevmode\hbox to3em{\hrulefill}\thinspace}
\providecommand{\MR}{\relax\ifhmode\unskip\space\fi MR }
\providecommand{\MRhref}[2]{%
  \href{http://www.ams.org/mathscinet-getitem?mr=#1}{#2}
}
\providecommand{\href}[2]{#2}

\smallskip
\noindent
Juan Elias\\
Departament d'\`Algebra i Geometria\\
Universitat de Barcelona\\
Gran Via 585, 08007 Barcelona, Spain\\
e-mail: {\tt elias@ub.edu}

\smallskip
\noindent
Maria Evelina Rossi\\
Dipartimento di Matematica\\
Universit{\`a} di Genova\\
Via Dodecaneso 35, 16146 Genova, Italy\\
e-mail: {\tt rossim@dima.unige.it}

\smallskip
\noindent
Giuseppe Valla\\
Dipartimento di Matematica\\
Universit{\`a} di Genova\\
Via Dodecaneso 35, 16146 Genova, Italy\\
e-mail: {\tt valla@dima.unige.it}

\end{document}